\documentclass[11pt]{article}
\usepackage[affil-it]{authblk}
\usepackage{geometry}                
\usepackage[mathscr]{euscript}
\usepackage{epigraph}
\usepackage{graphicx}
\usepackage{color}
\usepackage{subfig}
\usepackage{enumitem} 
\setlist[enumerate]{leftmargin=1.5cm,rightmargin=0.5cm,noitemsep, topsep=2pt}
\setlist[itemize]{leftmargin=1.5cm,rightmargin=0.5cm,noitemsep, topsep=2pt}

\usepackage{algorithm}
\usepackage[noend]{algpseudocode}

\usepackage{mdframed}

\definecolor{clemson-orange}{RGB}{234,106,32}
\definecolor{chicago-maroon}{RGB}{128,0,0}
\definecolor{northwestern-purple}{RGB}{82,0,99}
\definecolor{cornell-red}{RGB}{179,27,27}
\definecolor{sauder-green}{RGB}{171,180,0}
\definecolor{lawngreen}{RGB}{0,250,154}

\usepackage[numbers,sort&compress]{natbib}

\makeatletter
\def\BState{\State\hskip-\ALG@thistlm}
\makeatother

\usepackage{amsmath}
\usepackage{amssymb}
\usepackage{amsthm}
\allowdisplaybreaks

\newcommand{\bb}{\mathbb}

\newcommand{\N}{{\bb N}}
\newcommand{\Q}{{\bb Q}}
\newcommand{\R}{\bb R}

\theoremstyle{definition}
\newtheorem{theorem}{Theorem}
\newtheorem{lemma}{Lemma}
\newtheorem{corollary}{Corollary}
\newtheorem{proposition}{Proposition}
\newtheorem{assumption}{Assumption}

\newtheorem{remark}{Remark}
\newtheorem{example}{Example}

\usepackage[colorlinks,citecolor=northwestern-purple,urlcolor=chicago-maroon,linkcolor=chicago-maroon,backref=page]{hyperref}

\usepackage[nameinlink]{cleveref}
\crefname{assumption}{Assumption}{Assumptions}
\crefname{lemma}{Lemma}{Lemmas}
\crefname{theorem}{Theorem}{Theorems}
\crefname{corollary}{Corollary}{Corollaries}
\crefname{proposition}{Proposition}{Propositions}
\crefname{claim}{Claim}{Claims}
\crefname{algorithm}{Algorithm}{Algorithms}
\crefname{figure}{Figure}{Figures}
\crefname{remark}{Remark}{Remarks}
\crefname{section}{Section}{Sections}
\crefname{appendix}{Appendix}{Appendices}
\crefname{procedure}{Procedure}{Procedures}
\crefname{example}{Example}{Examples}
\crefname{table}{Table}{Tables}
\crefname{equation}{}{}
\crefname{enumi}{}{}

\title{A geometric simplex method in infinite-dimensional spaces}

\author{
Robert L. Smith\thanks{Industrial and Operations Engineering, University of Michigan, E-mail: rlsmith@umich.edu}\qquad   
Christopher Thomas Ryan%
\thanks{UBC Sauder School of Business, University of British Columbia, E-mail: chris.ryan@sauder.ubc.ca}
}
\affil{}

\begin{document}
%%%%%%%%%%%%%%%%

\maketitle

\begin{abstract}
We expand the basic geometric elements of the simplex method to linear programs in locally convex topological vector spaces and provide conditions under which the method converges in value to optimality. This setting generalizes many previous investigations of the simplex method, which are restricted to Hilbert spaces or otherwise specially structured instances. Our generality is obtained by avoiding the ``algebraic'' machinery of pivoting via column operations, which has required stronger topological conditions in establishing a connection between basic feasible solutions and extreme point structure. We show that our definition of polytopes captures optimization over the Hilbert cube, a quintessential object in infinite-dimensional spaces known for its surprisingly complicated properties. Moreover, all polytopes (under our definition) have exposed extreme points connected by edge paths. 
\end{abstract}

\section{Introduction}\label{s:intro}

Infinite-dimensional linear programming is both classical and cutting-edge. Some of the very first linear programs studied were stated over infinite-dimensional spaces \cite{monge1781memoire,kantorovitch1958translocation}, and there has been recent growing interest in several applied areas of mathematics that concern infinite-dimensional linear programs \cite{dworczak2024persuasion,kleiner2019strong,guo2022online,ghate2021simplex,ryan2018simplex,hernandez2012discrete,chand2002forecast,kryazhimskii2001constraint}.

Since at least \cite{anderson-nash}, researchers have been interested in extensions of the simplex method to understand the structure and optimality of linear programs in topological vector spaces. Anderson and Nash \cite{anderson-nash} were interested in generalizing the \emph{algebraic structure} underlying the simplex method, including notions of column bases, basic feasible solutions, and reduced costs. This program was championed over the years by several research groups 
(for example, \cite{pullan1993algorithm,ferris1989interior,smith1998infinite,ghate2013markov,sharkey2008simplex,shindin2020simplex,weiss2008simplex,anderson2001simplex,anderson1989extension}), including recent advancements \cite{ghate2021simplex,shindin2020simplex}. In this literature, the focus is largely on finding special types of linear programs where these algebraic concepts prove meaningful and amenable to analysis. In general, the dictionary between extreme points and basic feasible solutions that is foundational to the simplex method in finite dimensions (see Chapter 2 of \cite{bertsimas1997introduction}) breaks down in topological vector spaces (as detailed at length in \cite{anderson-nash}). 

This paper pursues a different program. We are interested in generalizing the classical \emph{geometric} understanding of the simplex method: start with an extreme point and iteratively follow the edges to adjacent extreme points until you reach an extreme point from which no adjacent edge direction improves the objective. We do not concern ourselves with finding an underlying algebraic description of extreme points. Despite alleviating ourselves of this concern, this new path is still fraught with complexities. It is not immediately obvious how to (i) define ``edges'', (ii) ensure that moving along the edges yields another extreme point, and (iii)  that the process makes guaranteed progress toward optimality. We tackle these and other issues in this paper. 

Our main result is to produce a geometric simplex algorithm with the following properties. Starting with an extreme point, it can identify a set of adjacent extreme points and define edges on which to reach them. We show (\cref{theorem:convergence-in-value}) that this algorithm converges to optimal value (that is, the objective values of successive iterates converge to the optimal value of the program). When a unique optimal solution exists, the iterates of the algorithm converge to the optimal solution (\cref{cor:convergence-in-solution}). Though the goal seems simple, a carefully constructed set of assumptions was required (see \cref{ass:compact}--\cref{ass:uniform-convergence-edge-costs} throughout the paper). These assumptions will be motivated and justified as they are presented and we will not attempt to recapitulate that careful development here. A summarizing discussion of the assumptions can be found in \cref{s:conclusions}.

Our algorithm has important limitations. We do not guarantee that each iteration can be executed in finite time, let alone that an optimal solution can be found in finite time. However, it ensures that the simplex method has the necessary geometric structure of extreme points, edges, and improving directions so that the algorithm does not get ``caught'' in locally optimal solutions or be unable to determine where to go next. The assumptions all drive toward validating the basic mechanism of ``moving from extreme point to extreme point while improving the objective'' under minimal restrictiveness.
Our goal is more conceptual than providing an implementable algorithm. We are interested in the general geometric structure of a compact convex set described by linear inequalities that make it amenable to a simplex method. 

\medskip
Our assumptions are broad enough to capture most of the well-known examples of when a simplex method ``works'' in the sense we put forward here, including the (corrected)%
\footnote{We say ``corrected'' conditions because we found an error in the development of \cite{ghate2021simplex}. We propose a fix to recover the results of that paper in \cref{s:fix-ghate}.}
conditions of \cite{ghate2021simplex} that define a broad class of countably-infinite linear programs (including many instances of previous investigations). Our approach allows us to move beyond the sequence space setting of \cite{ghate2021simplex} and related papers, opening the door to problems defined over non-sequence spaces, including function and measure spaces. 

However, we show that a very classical object --- the Hilbert cube --- fails the conditions of \cite{ghate2021simplex} but nonetheless satisfies our assumptions (see \cref{ex:hilbert-cube,ex:hilbert-cube-ass:uniform-convergence-edge-costs,ex:hilbert-cube-bounded-at-extremes,ex:hilbert-cube-partial-sums-compact,ex:hilbert-cube-steepest-exists,ex:hilbert-cube-edge-lengths-bounded-away-from-zero,ex:hilbert-cube-countably-many-constraints}).  This demonstrates that our simplex method is broader than that explored in the previous literature. The fact that the Hilbert cube is problematic for previous methods is not too surprising, despite its simplicity, as it is a notoriously challenging example to understand and analyze \cite{armstrong1977polyhedrality}.

Our assumptions provide a novel definition of what we will call a \emph{polytope} for general topological vector spaces: a polytope is an object where a geometric simplex method can be used to optimize a linear functional over it, potentially requiring infinitely many iterations, under our assumptions. Why might there be interest in such a definition? The familiar definitions of a polytope do not extend to general topological spaces as we might hope. In finite-dimensional space, there are two alternative definitions of a polytope: (i) the convex hull of finitely many points or (ii) the bounded intersection of finitely many halfspaces. Both of these conceptions are flawed in infinite-dimensional space. For (i), the convex hull of finitely many points is only finite-dimensional, a trivial restriction in infinite-dimensional spaces. Indeed, several papers have successively executed simplex-like algorithms on infinite-dimensional objects \cite{sharkey2008simplex,ghate2013markov,ryan2018simplex,ghate2021simplex}. The problem with (ii) is arguably even more severe: in infinite-dimensional space, the intersection of finitely many halfspaces is never bounded.

Other authors have proposed alternative definitions for polytopes, while there is generally no consensus on how they should be defined. In our search of the literature, we found at least twelve alternative definitions of ``polytope'' in general spaces (and make no claims of exhaustion) \cite{bastiani1959cones,klee1956strict,maserick1965convex,alfsen1969vertices,phelps1969infinite,amir1972suns,gleit1972note,rajagopalan1974maximal,lau1974infinite,brosowski1974some,durier1986sets}. For a comprehensive overview of this topic and related themes see  \cite[Chapter 6]{fonf2001infinite}. Several papers have attempted to reconcile this multitude of definitions, including \cite{armstrong1977polyhedrality,durier1993polyhedral,fonf2004infinite}. \cref{s:defining-polytopes} provides additional discussion and shows that the Hilbert cube, arguably the simplest meaningful feasible region one can imagine, fails to meet the definition of many current notions of a polytope in infinite dimensions but nonetheless satisfies our assumptions.

The rest of the paper is organized as follows. In \cref{s:setup}, we provide the details of our basic settings and introduce the main notation. There, we introduce our object of study, which we denote by $\mathcal P$, the intersection of halfspaces in a topological vector space. \cref{s:geometric-properties} elaborates the geometric properties of $\mathcal P$, including extreme points, edges, and support cones. \cref{s:geometric-simplex-method} presents our geometric simplex method building on these geometric structures. This section contains our main result (\cref{theorem:convergence-in-value}). Finally, in \cref{s:defining-polytopes}, we briefly discuss the challenge of defining polytopes in topological vector spaces and how our simplex method approach enters the conversation. \cref{s:geometric-simplex-method,s:defining-polytopes} both discuss the mysterious example of the Hilbert cube. \cref{s:conclusions} concludes, including some additional discussion of our assumptions. The proofs of propositions and theorems are found in the main text; the proofs of technical lemmas are placed in the appendix. 

\section{Setup}\label{s:setup}

We start with the basic setup of our problem and introduce all of the preliminary notation. For ease of reference, \cref{fig:basic-notions} captures much of the notation of this section in a finite-dimensional setting. 

Let $X$ be a real locally convex Hausdorff topological vector space \cite[p. 53]{holmes2012geometric} with topology $\tau$. Let $X^*$ be the set of continuous linear functionals on $X$; that is, functions $\phi : X \to \R$ that are continuous in the $\tau$ topology of $X$ and the usual topology on the reals $\R$. A proper affine subspace that is maximal (when ordered by inclusion) is a {\em hyperplane} in $X$. From \cite[p. 3]{holmes2012geometric}, an affine subspace $H$ is a closed hyperplane if and only if there is some non-zero continuous linear functional $\phi\in X^*$ and scalar $b\in \R $ such that $H=\{x\in X \mid \phi(x) = b\}$. We call $S = \{x \in X \mid \phi(x) \le b\}$ the \emph{halfspace} associated with hyperplane $H$. Since $\phi \in X^*$, $S$ is closed (in the topology $\tau$).

Given an index set $A$, for all $\alpha \in A$ define a hyperplane $H_\alpha \triangleq \{x\in X \mid \phi_\alpha(x)=b_\alpha\}$ where $\phi_\alpha\in X^*$ and $b_\alpha\in \R$. Let $\{S_\alpha \mid \alpha\in A\}$ be the associated collection of halfspaces (called \emph{constraints}) of the form
\begin{equation*}
S_\alpha \triangleq \{x\in X \mid \phi_\alpha(x)\le b_\alpha\}.
\end{equation*}
Our main object of interest is the set 
\begin{equation}\label{eq:region-of-interest}
\mathcal P \triangleq \bigcap_{\alpha\in A} S_\alpha,
\end{equation}
which denotes the intersection of all halfspaces in the family $\{S_\alpha \mid \alpha\in A\}$.%
\footnote{We do not call $\mathcal P$ a polytope without further assumptions; this definition is more general than what we will call a polytope. Indeed, a disk in $\R^2$ satisfies this definition and is clearly not a polytope.} 
Clearly, $\mathcal P$ is a closed convex set, since each $S_\alpha$ is closed and convex, and these properties are conserved under (arbitrary) intersections. Conversely, every closed convex set in $X$ has the form \cref{eq:region-of-interest} for some index set $A$ and constraint set $\{S_\alpha\mid \alpha \in A\}$ \cite[Corollary 5.83]{hitchhiker}.   

We say a constraint $S_\alpha$ is \emph{active} (or \emph{tight}) at a point $p \in \mathcal P$ if $p \in H_\alpha$. For a given $p \in \mathcal P$, let $A(p)$ denote the set of active indices of constraints at $p$. The set 
\begin{equation}\label{eq:tight-constraints-at-p}
\mathcal H (p) \triangleq \bigcap_{\alpha \in A(p)} H_\alpha =  \{x\in X \mid \phi_\alpha(x)=b_\alpha, \alpha\in A(p)\}
\end{equation}
defines the intersection of all hyperplanes in the family $\{H_\alpha \mid \alpha \in A\}$ that pass through the point $p$. Note that $\mathcal H(p)=X$ when $A(p)=\emptyset$. 

At a point $p\in \mathcal P$, for all $\alpha \in A$ we let 
\begin{equation}\label{eq:slack}
s_\alpha (p) \triangleq b_\alpha - \phi_\alpha(p) 
\end{equation}
denote the {\em slack} of constraint $S_\alpha$ at the point $p$. Constraints that are tight at $p$ have a slack of $0$.

To avoid trivialities throughout, we assume $\mathcal P$ is not a singleton $\{p\}$.

\begin{figure}
    \centering
    \includegraphics[width=0.45\linewidth]{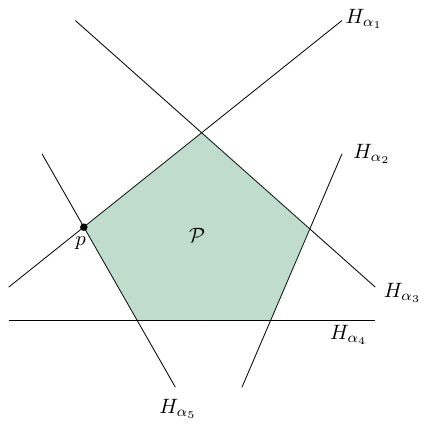}
    \caption{An illustration of the basic notions in the paper. Here $A = \{\alpha_1, \alpha_2, \dots \alpha_5\}$, $A(p) = \{\alpha_1, \alpha_5\}$, $\mathcal H(p) = H_{\alpha_1} \cap H_{\alpha_5} = \{p\}$, and $s_{\alpha_1}(p) = s_{\alpha_5}(p) = 0$ with $s_{\alpha_j}(p) > 0$ for $j = 2,3,4$.}
    \label{fig:basic-notions}
\end{figure}

\section{Geometric properties}\label{s:geometric-properties}

The geometry of the simplex method is built on the notions of extreme points and edges that connect adjacent extreme points. Readers who are familiar with these objects should take note of how our approach differs from the usual approaches in finite-dimensional spaces (like in \cite{bertsimas1997introduction}) by relying more exclusively on the algebraic structures of the underlying topological vector space $X$ than other treatments. 

\subsection{Extreme point structure}\label{ss:extreme-points}

Recall that an \emph{extreme point} of a set $K$ is an element $x \in K$ where there does not exist $y, z \in K$ such that $x \in (x',x'') \triangleq \{\lambda x' + (1-\lambda) x'' \mid \lambda \in (0,1) \}$ and $x' \neq x''$. Let $E$ be the set of all extreme points of $\mathcal P$.

Our first task is to relate the algebraic definition of extreme point to the halfspace structure that defines $\mathcal P$. This relationship is well-understood in finite-dimensional vector spaces $X$ \cite[Section~2.2]{bertsimas1997introduction}. The situation in more general $X$ is more complex (see \cite[Section~2.3]{anderson-nash} and \cite{ghate2009characterizing}). One direction of the relationship is relatively straightforward:

\begin{proposition}\label{prop:intersect-then-extreme}
A point $p\in \mathcal P$ is an extreme point of $\mathcal P$ if $\mathcal H(p)=\{p\}$. 
\end{proposition}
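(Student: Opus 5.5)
We argue by contraposition. Suppose $p \in \mathcal P$ is not an extreme point, and show that $\mathcal H(p)$ contains a point other than $p$. By the definition of extreme point, there exist $x', x'' \in \mathcal P$ with $x' \neq x''$ and some $\lambda \in (0,1)$ such that $p = \lambda x' + (1-\lambda) x''$. We will show that both $x'$ and $x''$ lie in $\mathcal H(p)$. Since $x' \neq x''$, at least one of them differs from $p$, so $\mathcal H(p) \neq \{p\}$, which is what we need.

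The key step is to verify that every constraint active at $p$ is also active at $x'$ and at $x''$. Fix $\alpha \in A(p)$. Because $x', x'' \in \mathcal P \subseteq S_\alpha$, we have $\phi_\alpha(x') \le b_\alpha$ and $\phi_\alpha(x'') \le b_\alpha$. By linearity of $\phi_\alpha$,
\begin{equation*}
b_\alpha = \phi_\alpha(p) = \lambda \phi_\alpha(x') + (1-\lambda)\phi_\alpha(x'').
\end{equation*}
This says $b_\alpha$ is a convex combination, with strictly positive weights, of two numbers each at most $b_\alpha$. Both numbers must therefore equal $b_\alpha$: if, say, $\phi_\alpha(x') < b_\alpha$, the right-hand side would be strictly less than $b_\alpha$. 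Hence $x', x'' \in H_\alpha$ for every $\alpha \in A(p)$, and so $x', x'' \in \mathcal H(p)$ by \cref{eq:tight-constraints-at-p}.

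The degenerate case $A(p) = \emptyset$ needs separate attention. There $\mathcal H(p) = X$, and the hypothesis $\mathcal H(p)=\{p\}$ would force $X=\{p\}$. This contradicts the standing assumption that $\mathcal P$ is not a singleton. So the hypothesis of the proposition can only hold when $A(p) \neq \emptyset$, though the argument above does not actually need this.

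We do not expect a genuine obstacle. The only points requiring care are the strict positivity of $\lambda$ and $1-\lambda$, which drives the equality argument, and the observation that no topological facts are needed beyond linearity of each $\phi_\alpha$.
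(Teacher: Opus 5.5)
Your proof is correct and follows essentially the same route as the paper. The paper argues by contradiction rather than contraposition, but the key step is identical: because $\lambda \in (0,1)$ gives strictly positive weights, every constraint active at $p$ must also be active at $x'$ and $x''$, which places both in $\mathcal H(p)$. Your remark on the case $A(p)=\emptyset$ is harmless but, as you note yourself, not needed.
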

\begin{proof}
Suppose $\mathcal H(p)=\{p\}$ and, by way of contradiction, that $p=\lambda x' + (1-\lambda) x^{\prime\prime}$ for some $x', x'' \in \mathcal P$ and $\lambda \in (0,1)$ with $x^{\prime}\neq x^{\prime\prime}$. 

Let's derive a contradiction to $x' \neq x''$. We do so by showing that $x', x'' \in \mathcal H(p)$ and using the fact that $\mathcal H(p)$ is the singleton $\{p\}$ and so $x', x'' \in \mathcal H(p)$ implies that $x' = x'' = p$. 

It remains to show that $x', x'' \in \mathcal H(p)$. In particular, from \cref{eq:tight-constraints-at-p} it suffices to show that $x', x'' \in H_\alpha$ for all $\alpha \in A(p)$. To see this, select an arbitrary $\alpha \in A(p)$. If both $x', x'' \in H_\alpha$ then we are done, and so may assume that one of $x'$ or $x''$ is not in $H_\alpha$. This violates the fact that $\alpha \in A(p)$. Indeed, since both $x'$ and $x''$ are in $S_\alpha$ (due to the fact $x', x'' \in \mathcal P$), it must be that either $\phi_\alpha(x') < b_\alpha$ or $\phi_\alpha(x'') < b_\alpha$. But then from the linearity of $\phi_\alpha$ we have, $\phi_\alpha(p)=\lambda \phi_\alpha(x')+(1-\lambda) \phi_\alpha(x^{\prime\prime})<b_\alpha$. But this contradicts the fact that $\alpha \in A(p)$; i.e.,   $\phi_\alpha(p) = b_\alpha$. 

We conclude that $x', x'' \in H_\alpha$, completing the proof.
\end{proof}

This leaves open the question of whether $p$ being an extreme point implies $\mathcal H(p) = \{p\}$. The following example shows that this need not be the case. 

\begin{figure}
    \centering
    \includegraphics[width=0.2\linewidth]{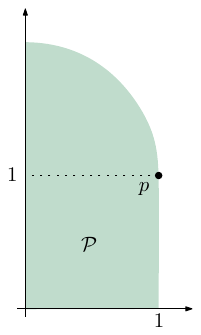}
    \caption{A counter-example to the converse of \cref{prop:intersect-then-extreme}.}
    \label{fig:non-exposed-face}
\end{figure}

\begin{example}\label{example:counter-example-to-converse}
Consider the set $\mathcal P$ shown in green in \cref{fig:non-exposed-face}. Since it is a closed convex set, we know $\mathcal P$ is of the form \cref{eq:region-of-interest}. However, notice that although $p$ is an extreme point of $\mathcal P$, we do not have $\mathcal H(p) = \{p\}$. Indeed, $\mathcal H(p)$ lies along the line segment from $(1,0)$ to $p$.%
\footnote{It is worth noting that $p$ is known as a non-exposed extreme point, see \cite[Chapter~12]{soltan2019lectures}.} $\blacktriangleleft$
\end{example}

To prove the converse, we make a few assumptions. 

\begin{assumption}[Compactness]\label{ass:compact}
The set $\mathcal P$ is non-empty and compact (in the topology $\tau$).
\end{assumption}

For ease of exposition, we will assume all extreme points are {\em non-degenerate}. That is, an extreme point $p\in E$ is not  the  unique intersection of any strict subset of its active constraints.
Also  we will  require there are no {\em phantom constraints} which arise implicitly as the limit of a  sequence of  constraints.  For example, consider in $\mathbb{R}^n$ the sequence of constraints $x_j\le 1+(1/2)^j, j =1,2,\ldots$ which is equivalent to the presence of a single constraint $x\le 1$. In, for example, a two dimensional box determined by phantom constraints, each extreme point  has {\em no} active constraints. We want conditions that exclude the case of either too many and too few active constraints at an extreme point. These non-degeneracy and no phantom constraints requirements  together that we will term {\em strict non-degeneracy} prevents over and under representation of active constraints in the determination of an extreme point. In \cref{ss:edge-structure}, we will use these assumptions to establish a one-to-one correspondence between active constraints at an extreme point and edges emanating from it. 

\begin{assumption}[Strict non-degeneracy]\label{ass:support-conditions}
\begin{itemize}
    \item[a)] For all extreme points $p\in E$ and all indices $\alpha \in A(p),$   $\cap_{\alpha^\prime\in A(p),\alpha^\prime\neq \alpha} H_{\alpha^\prime} \supset \{p\}$.
    \item[b)] Moreover, $\rho \triangleq \inf_{p\in E,\alpha\in A \setminus A(p)} s_\alpha(p)>0$.
\end{itemize}
\end{assumption}

\begin{assumption}[Uniformly bounded linear functionals]\label{ass:constraints-bounded}
For all $p \in E$ and all $r\in \mathcal H (p) - p$, $\sup_{\alpha\in A\setminus A(p)} \phi_\alpha (r) <\infty$.
\end{assumption}

\begin{example}[Hilbert cube]\label{ex:hilbert-cube}
Consider the following example inspired by the development in \cite{ghate2021simplex}. Let $X$ be the Hilbert space defined in that paper, namely
\[
X \;=\; \Bigl\{x=(x_j)_{j\ge 1} : \sum_{j=1}^\infty \delta_j^2 x_j^2 < \infty\Bigr\},
\qquad 
\langle x,y\rangle_X \;=\; \sum_{j=1}^\infty \delta_j^2 x_j y_j,\quad 
\|x\|_X^2=\sum_{j=1}^\infty \delta_j^2 x_j^2,
\]
with weights $(\delta_j)$ chosen so that $\delta_j\in(0,1)$ and $\sum_{j=1}^\infty \delta_j^2<\infty$ (for instance, $\delta_j=\delta^j$ with $0<\delta<1$). Define the “Hilbert cube”
\[
\mathscr H \;=\; \prod_{j=1}^\infty [0,1] \;\subseteq\; X.\footnote{For a similar definition of the Hilbert cube, see \cite[Section 3.9]{hitchhiker}.}
\]
We may express $\mathscr H$ in the form of \cref{eq:region-of-interest} by setting up the constraints to express $0 \le x_j \le 1$ for all $j = 1, 2, \dots$. We can capture this by setting $A = \{1,2,\dots\} \cup \{-1, -2, \dots\}$ where for $\alpha \in \{1,2,\dots\}$ we set $\phi_\alpha(x) = \langle x, y^\alpha \rangle$ where $y^\alpha_j = 1$ for $j = \alpha$ and $0$ otherwise with $b_\alpha = \delta_\alpha^2$. For $\alpha \in \{-1,-2, \dots\}$, set $\phi_\alpha(x) = \langle x, y^\alpha \rangle$ where $y^\alpha_j = -1$ for $j = \alpha$ and $0$ otherwise with $b_\alpha = 0$. 

We now show that the Hilbert cube satisfies \cref{ass:compact,ass:support-conditions,ass:constraints-bounded}. Compactness follows by applying Tychonoff's theorem, establishing \cref{ass:compact}. 

As for \cref{ass:support-conditions}, it is easy to see the extreme points of $H$ have the form $p$ such that $p_j = 0$ or $1$ for all $j = 1, 2, \dots$. Observe that for all $j$ such that $p_j = 0$, the constraint $-j \in A$ is tight and constraint $j \in A$ is slack with $s_j(p) = 1$. Also, for all $j$ such that $p_j = 1$ we have constraint $j \in A$ is tight and constraint $-j \in A$ is slack with $s_{-j}(p) = 1$. In other words, $\inf_{\alpha \in A \setminus A(p)} s_\alpha(p) = 1$ for all $p \in E$ and so in \cref{ass:support-conditions}, $\rho = 1 > 0$. This establishes \cref{ass:support-conditions}. 

\cref{ass:constraints-bounded} is nearly vacuous in this setting. It is easy to see that all extreme points $p$ have $\mathcal H(p) = \{p\}$ and so the only possible value for $r$ to consider in \cref{ass:constraints-bounded} is $r = 0$. $\blacktriangleleft$
\end{example}

The example shown in \cref{fig:non-exposed-face} fails \cref{ass:support-conditions} at $p$. Indeed, the constraints that define the ``quarter disk'' part of the figure about the line $y = 1$ have slacks that diminish to 0 at the point $p$. 

Under these assumptions, we can establish the converse of \cref{prop:intersect-then-extreme}. The argument requires the following two technical lemmas (with proofs relegated to the appendix). A \emph{line} refers to a one-dimensional affine subspace. We refer to $0$-dimensional affine subspaces (that is, singletons) as \emph{trivial} affine subspaces.

\begin{lemma}\label{lemma:0}
Suppose a line intersects a hyperplane but does not lie within that hyperplane. Then the line intersects the hyperplane at a single point.
\end{lemma}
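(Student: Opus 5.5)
The plan is to parametrize the line and reduce the question to a scalar affine equation. Write the line as $L = \{x_0 + t d \mid t \in \R\}$ for some $x_0 \in X$ and nonzero direction $d \in X$. Write the hyperplane as $H = \{x \in X \mid \phi(x) = b\}$, where $\phi$ is a nonzero linear functional and $b \in \R$. For a closed hyperplane this representation is the one recalled in \cref{s:setup}. If closedness is not assumed, a maximal proper affine subspace is still the level set of some nonzero linear functional, not necessarily continuous; only linearity is used below. By linearity of $\phi$, the point $x_0 + t d$ lies in $H$ if and only if
\begin{equation*}
g(t) \triangleq \phi(x_0) + t\,\phi(d) = b .
\end{equation*}
So $L \cap H$ is in bijection, via $t \mapsto x_0 + t d$, with the solution set of this scalar equation. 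The map is injective because $d \neq 0$.

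Next, split into cases on $\phi(d)$. If $\phi(d) = 0$, then $g$ is constant, equal to $\phi(x_0)$. Since $L$ intersects $H$ by hypothesis, there is some $t_0$ with $g(t_0) = b$. Hence $g \equiv b$, so every point of $L$ lies in $H$. That is, $L \subseteq H$, which contradicts the hypothesis that the line does not lie within the hyperplane. Therefore $\phi(d) \neq 0$. In that case $g$ is a nonconstant affine function of one real variable, and the equation has the unique solution $t^* = (b - \phi(x_0))/\phi(d)$. It follows that $L \cap H = \{x_0 + t^* d\}$ is a single point.

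There is no real obstacle here. The only point needing care is to use the hypothesis that $L$ meets $H$ in the degenerate case $\phi(d) = 0$. Without it, that case would allow $L \cap H = \emptyset$ rather than forcing $L \subseteq H$.
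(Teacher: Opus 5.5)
Your proof is correct and follows essentially the same route as the paper: parametrize the line as $p+\lambda r$, use the hypothesis that the line meets $H$ to rule out $\phi(r)=0$ (which would force the whole line into $H$), and then conclude uniqueness from $\phi(r)\neq 0$. Your version solves directly for $t^* = (b-\phi(x_0))/\phi(d)$. This is slightly cleaner than the paper's step from $\lambda_1\phi(r)=\lambda_2\phi(r)$ to $\lambda_1/\lambda_2=1$, which implicitly divides by a possibly zero $\lambda_2$.
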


\begin{lemma}\label{lemma:00}
If $S$ is a non-trivial affine subspace in $X$ and $H$ is a hyperplane intersecting $S$ at the singleton point $p$, then $S$ is a line through $p$.    
\end{lemma}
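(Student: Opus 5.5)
We need to prove \cref{lemma:00}: if $S$ is a non-trivial affine subspace, $H$ a hyperplane, and $S\cap H=\{p\}$, then $S$ is a line through $p$.

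The plan is to translate everything to the origin and argue with linear subspaces. Since $p\in S\cap H$, write $S=p+V$ and $H=p+W$, where $V$ and $W$ are linear subspaces. Because $H$ is a maximal proper affine subspace, $W$ is a maximal proper linear subspace of $X$, i.e.\ has codimension one. The hypothesis $S\cap H=\{p\}$ translates to $V\cap W=\{0\}$. Non-triviality of $S$ gives a nonzero $v\in V$, so the line $p+\mathrm{span}\{v\}$ lies in $S$; it remains to show $V=\mathrm{span}\{v\}$.

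The key step is the codimension argument. Since $V\cap W=\{0\}$ and $v\neq 0$, we have $v\notin W$. Maximality of $W$ then gives $W+\mathrm{span}\{v\}=X$, because this sum is a linear subspace strictly containing $W$. Now take any $u\in V$ and decompose $u=w+tv$ with $w\in W$ and $t\in\R$. Then $w=u-tv\in V\cap W=\{0\}$, so $u=tv$. Hence $V=\mathrm{span}\{v\}$, and $S=p+\mathrm{span}\{v\}$ is a line through $p$.

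The main subtlety is the passage from ``maximal proper affine subspace'' to ``codimension-one linear subspace through the origin after translation.'' For this, note that translation by $-p$ preserves inclusion among affine subspaces. If $W\subsetneq U\subsetneq X$ for some linear subspace $U$, then $p+U$ would be a proper affine subspace strictly containing $H$, contradicting maximality. The argument could alternatively use \cref{lemma:0}, but it is not needed. No topology enters, since all the reasoning is purely algebraic.
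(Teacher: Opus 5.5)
Your proof is correct, but it takes a different route from the paper's. The paper argues with lines rather than subspaces. It fixes $q\in S$ with $q\neq p$, notes $\ell(p,q-p)\subseteq S$, and supposes some $o\in S$ lies off this line. It then takes the parallel translate $\ell(o,q-p)\subseteq S$ and invokes \cref{lemma:0} to say this translate meets $H$ in a single point $p'$. Since the two parallel lines are disjoint, $p'\neq p$, contradicting $S\cap H=\{p\}$. You instead pass to $V=S-p$ and $W=H-p$ and use maximality of $W$ directly to get $W+\mathrm{span}\{v\}=X$ for any nonzero $v\in V$; then $V\cap W=\{0\}$ forces $V=\mathrm{span}\{v\}$, with no contradiction argument needed.

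Your route buys self-containment. \cref{lemma:0} only gives \emph{uniqueness} of the intersection point for a line already known to meet $H$, and the paper does not say why $\ell(o,q-p)$ meets $H$ at all. That existence step needs $H=\{x\mid \phi(x)=b\}$ for a nonzero linear functional $\phi$, the observation $\phi(q-p)\neq 0$, and solving $\phi(o)+\lambda\phi(q-p)=b$ for $\lambda$. This is exactly your identity $W+\mathrm{span}\{q-p\}=X$ in functional form. Your version gets it from the paper's ``maximal proper affine subspace'' definition alone, with no functional representation of $H$ and no topology. The paper's version has the merit of reusing \cref{lemma:0} and staying in the line-by-line geometric language of the surrounding section.
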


\begin{proposition}\label{prop:extreme-then-intersect}
If \cref{ass:compact,ass:support-conditions,ass:constraints-bounded} hold and $p$ is an extreme point of $\mathcal P$ then $\mathcal H (p)= \{p\}$.
\end{proposition}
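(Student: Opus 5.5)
We argue by contradiction. Suppose $p\in E$ but $\mathcal H(p)\neq\{p\}$. Since $p\in\mathcal H(p)$ and $\mathcal H(p)$ is an affine subspace, there is a nonzero direction $r\in\mathcal H(p)-p$. The plan is to show that for some small $\varepsilon>0$ both $p+\varepsilon r$ and $p-\varepsilon r$ lie in $\mathcal P$. Then $p=\tfrac12(p+\varepsilon r)+\tfrac12(p-\varepsilon r)$ with the two endpoints distinct, which contradicts the extremality of $p$.

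\textbf{Step 1 (active constraints).} For $\alpha\in A(p)$, the choice $r\in\mathcal H(p)-p$ gives $\phi_\alpha(r)=0$. Hence $\phi_\alpha(p\pm\varepsilon r)=b_\alpha$ for every $\varepsilon$, and these constraints remain satisfied (and tight) along the whole line $p+\mathbb R r$.

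\textbf{Step 2 (inactive constraints).} For $\alpha\in A\setminus A(p)$ we have
\[
\phi_\alpha(p\pm\varepsilon r)=b_\alpha-s_\alpha(p)\pm\varepsilon\phi_\alpha(r).
\]
By \cref{ass:support-conditions}(b), $s_\alpha(p)\ge\rho>0$ uniformly over these $\alpha$. We therefore need a uniform bound $M\triangleq\sup_{\alpha\in A\setminus A(p)}|\phi_\alpha(r)|<\infty$. \Cref{ass:constraints-bounded} gives $\sup_\alpha\phi_\alpha(r)<\infty$. Since $-r$ also lies in $\mathcal H(p)-p$ (it is a linear subspace direction), the same assumption applied to $-r$ gives $\sup_\alpha(-\phi_\alpha(r))<\infty$. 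Together these yield $M<\infty$. If $M=0$, any $\varepsilon$ works. Otherwise we take $\varepsilon=\rho/M$, and then $\phi_\alpha(p\pm\varepsilon r)\le b_\alpha-\rho+\rho=b_\alpha$ for all inactive $\alpha$. Thus $p\pm\varepsilon r\in\mathcal P$.

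\textbf{Main obstacle.} The delicate point is the uniformity in Step 2. With infinitely many inactive constraints, pointwise positivity of the slacks would not suffice. Both the uniform lower bound $\rho$ from \cref{ass:support-conditions}(b) and the two-sided bound obtained from \cref{ass:constraints-bounded} via $\pm r$ are needed; this is precisely what fails in \cref{example:counter-example-to-converse}.

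The argument as planned does not seem to require \cref{ass:compact}, part (a) of \cref{ass:support-conditions}, or \cref{lemma:0,lemma:00}. Those lemmas could be used in an alternative route: intersect the line $p+\mathbb R r\subseteq\mathcal H(p)$ with the constraint hyperplanes, and use compactness to show that the segment $\mathcal P\cap(p+\mathbb R r)$ has $p$ in its relative interior. I would fall back on that route only if the direct $\varepsilon$-argument encountered a subtlety, for instance if \cref{ass:constraints-bounded} had to be read one-sidedly.
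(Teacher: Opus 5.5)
Your proof is correct. Its core is the same as the paper's: take $r\neq 0$ in $\mathcal H(p)-p$, note that the active constraints stay tight on $p+\R r$, and play the uniform slack $\rho$ against a bound on $\phi_\alpha(r)$ to get a positive feasible step in both directions.

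The paper executes this as a ratio test. It computes the hitting parameters $\lambda_\alpha=s_\alpha(p)/\phi_\alpha(r)$ for the inactive hyperplanes met at positive (resp.\ negative) $\lambda$. It invokes \cref{ass:compact} (linear boundedness) only to ensure these index sets $A^\pm$ are nonempty, so that the exit points $p^\pm$ are defined. It then bounds $\inf_{A^+}\lambda_\alpha$ below by $\rho/\sup_\alpha\phi_\alpha(r)$ and dispatches the negative side with ``Similarly''.

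Your single uniform step $\varepsilon=\rho/M$, with $M=\sup_{\alpha\in A\setminus A(p)}|\phi_\alpha(r)|$, never locates the exit points. This buys three things:
\begin{itemize}
\item It shows the proposition holds without \cref{ass:compact} and with only part (b) of \cref{ass:support-conditions}.
\item It avoids the paper's loose intermediate infimum over all of $A\setminus A(p)$, where some denominators $\phi_\alpha(r)$ are zero or negative.
\item It makes explicit what ``Similarly'' hides, namely that the negative side needs \cref{ass:constraints-bounded} applied to $-r$.
\end{itemize}

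That last point also means your proposed fallback would not help under a one-sided reading of \cref{ass:constraints-bounded}, since the compactness route needs the bound for $-r$ just as much. In any case, the assumption quantifies over the whole linear subspace $\mathcal H(p)-p$, so that reading never arises.
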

\begin{proof}
We prove this by establishing the contrapositive. Suppose  $\mathcal H(p)\ne \{p\}$ and so there exists a $q\in \mathcal H (p)$ with $q\ne p$. Let $r=q-p$. The goal is to show that $p$ is not an extreme point.

The line $\ell(p,r) \triangleq \{p + \lambda r \mid \lambda\in \R\}$ lies in $\mathcal H (p)$ since $\mathcal H (p)$ is a non-trivial affine subspace containing $p$ and $q$.

Let $A^+ \subseteq A \setminus A(p)$ be the set of indices of hyperplanes $H_\alpha$ (not in $A(p)$) that intersect $\ell(p)$ at $z_\alpha = \lambda_\alpha r + p$ for $\lambda > 0$. The set $A^-$ is defined similarly but for $\lambda < 0$. 

Note that $A^+$ and $A^-$ may be finite or infinite but both must be nonempty. This follows by \cref{ass:compact} and \cite[p. 736]{klee1956strict} that states that a compact convex set (namely, $\mathcal P$) in a linear topological space is linearly bounded, i.e. its intersection with any line is bounded. However, if (say) $A^+$ were empty then the ray $\{p + \lambda r \mid \lambda \ge 0\}$ would lie entirely in $\mathcal P$, contradicting the fact that $\mathcal P \cap \ell(p)$ is bounded. 

We argue that this allows us to move from $p$ in both directions along $\ell(p,r)$ and still remain in the set $\mathcal P$. 

To do this observe that we have for $\alpha\in A^+ \cup A^-$, $b_\alpha = \phi_\alpha(z_\alpha)=\phi_\alpha(\lambda_\alpha r+p)$ and by linearity of $\phi_\alpha$,  $\lambda_\alpha=(b_\alpha-\phi_\alpha(p))/\phi_\alpha(r)=s_\alpha(p)/\phi_\alpha(r)$. Hence 
\begin{align*}
\inf_{\alpha\in A^+}\lambda_\alpha & =\inf_{\alpha\in A^+} \frac{s_\alpha(p)}{\phi_\alpha(r)}  \\
& \ge \inf_{\alpha\in A\setminus A(p)} \frac{s_\alpha(p)}{\phi_\alpha(r)}  \\
& \ge  \inf_{\alpha\in A\setminus A(p)} \frac{s_\alpha(p)}{\sup_{\alpha\in A\setminus A(p)} \phi_\alpha(r)}>0
\end{align*}
where the second inequality follows from \cref{ass:support-conditions,ass:constraints-bounded}. Similarly, $\inf_{\alpha\in A^-}-\lambda_\alpha >0$.

Therefore, $p$ is in the interior of the line segment joining $p^+=\{\inf_{\alpha \in A^+} \lambda_\alpha\} r + p$ and $p^-=\{-\inf_{\alpha\in  A^-} -\lambda_\alpha\} r + p$ which is in $\mathcal P$ because $p^+$ and $p^-$ are in $\mathcal P$ as they are in $\mathcal H(p)$ as well and thereby satisfy the constraints passing through $p$ and also satisfy  inequality constraints not passing through $p$. Therefore the line segment joining $p^+$ and $p^-$ is in $\mathcal P$ by the convexity of $\mathcal P$. We have $p$ is in the interior of this line segment in $\mathcal P$ and so $p$ is not an extreme point of $\mathcal P$. 
\end{proof}

From \cref{prop:extreme-then-intersect,prop:intersect-then-extreme}, we have the following result:

\begin{theorem}\label{thm:characterize-EP}
Suppose \cref{ass:compact,ass:support-conditions,ass:constraints-bounded} hold. A point $p \in \mathcal P$ is an extreme point of $\mathcal P$ if and only if the hyperplanes passing through $p$ intersect in the singleton $\{p\}$, i.e. $\mathcal H(p) = \{p\}$.    
\end{theorem}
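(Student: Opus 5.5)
The theorem combines the two preceding propositions, so the plan is to prove the biconditional one direction at a time, citing each proposition under the stated hypotheses.

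For the ``if'' direction, suppose $p \in \mathcal P$ satisfies $\mathcal H(p) = \{p\}$. Then \cref{prop:intersect-then-extreme} gives directly that $p$ is an extreme point of $\mathcal P$. That proposition needs no assumptions beyond $\mathcal P$ being of the form \cref{eq:region-of-interest}, so \cref{ass:compact,ass:support-conditions,ass:constraints-bounded} play no role here. The reason it works is that any $x', x'' \in \mathcal P$ with $p$ in the open segment $(x',x'')$ must keep every constraint active at $p$ tight, so $x', x'' \in \mathcal H(p) = \{p\}$.

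For the ``only if'' direction, suppose $p$ is an extreme point of $\mathcal P$ and \cref{ass:compact,ass:support-conditions,ass:constraints-bounded} hold. Then \cref{prop:extreme-then-intersect} gives $\mathcal H(p) = \{p\}$. The content of that proposition is its contrapositive argument. If some $r \neq 0$ has $p + r \in \mathcal H(p)$, compactness (via linear boundedness of compact convex sets) ensures that non-active hyperplanes cut the line $\ell(p,r)$ on both sides of $p$. The slack bound $\rho > 0$ from \cref{ass:support-conditions} b) and the uniform bound from \cref{ass:constraints-bounded} then keep the step sizes $\lambda_\alpha = s_\alpha(p)/\phi_\alpha(r)$ bounded away from zero on each side. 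Hence $p$ lies strictly inside a segment contained in $\mathcal P$, and so $p$ is not extreme.

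Combining the two directions gives the equivalence. Since both propositions are already established, I expect no real obstacle; the only point to check is that the hypotheses of the theorem cover those required by \cref{prop:extreme-then-intersect}, and they do, because the theorem assumes exactly \cref{ass:compact,ass:support-conditions,ass:constraints-bounded}.
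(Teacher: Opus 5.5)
Your proposal is correct and matches the paper's proof: the theorem is obtained by combining \cref{prop:intersect-then-extreme} for the ``if'' direction with \cref{prop:extreme-then-intersect} for the ``only if'' direction, under exactly the stated assumptions. One small refinement to your recap of the second proposition: \cref{ass:support-conditions}(b) and \cref{ass:constraints-bounded} are imposed only at extreme points, so that argument is really a contradiction starting from $p\in E$, and the bound on the negative side uses \cref{ass:constraints-bounded} applied to $-r\in\mathcal H(p)-p$.
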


\subsection{Edge structure}\label{ss:edge-structure}

% \chris{Put in non-degenerate assumption. We don't need non-redundancy. We can remove infinite-time preprocessing step.}

% In the development that follows we will work with a minimal set of constraints to define $\mathcal P$; that is, we will assume that the set of constraints $\{S_\alpha \mid \alpha \in A\}$ is such that for every $\alpha' \in A$, $\cap_{\alpha \in A \setminus \{\alpha'\}} S_\alpha \not\subset \cap_{\alpha \in A} S_\alpha$. We call such a set of constraints \emph{non-redundant}. The following result shows that we can assume this without loss using Zorn's lemma. 

% \begin{lemma}[Non-redundant]\label{lemma:nondegeneracy}
% Let $\{S_\alpha \mid \alpha \in A\}$ be a set of constraints such that $\mathcal P = \cap_{\alpha \in A} S_\alpha$. There exists a subset of constraints $A^* \subseteq A$ such that $\mathcal P = \cap_{\alpha \in A^*} S_\alpha$ and the set of constraints $\{S_\alpha \mid \alpha \in A^*\}$ is non-redundant. 
% \end{lemma}

\textcolor{blue}{Defining edges starts with defining the following lines emanating from extreme points. By \cref{ass:support-conditions}(a), for all extreme points $p\in E$ and  $\alpha\in A(p)$,  the set 
\begin{equation}\label{eq:emenating-line}
\ell_{\alpha} (p) = \cap_{\alpha'\in A(p),\alpha'\neq \alpha} H_{\alpha'}
\end{equation}
is a line in $X$ passing through $p$.}

In the remainder of this section, we give a geometric interpretation of the lines $\ell_\alpha(p)$ for $\alpha \in A(p)$ in relation to the set $\mathcal P$. As we shall see, the lines ``traces'' out an edge $e_\alpha(p)$ of $\mathcal P$ between $p$ and another extreme point $q_\alpha(p)$ of $\mathcal P$. We call $q_\alpha(p)$ an \emph{adjacent} extreme point of $p$. See \cref{fig:edge-inspiration} for an illustration of the idea of the more familiar setting of finite-dimensional space. Let's make all of this more precise.

\begin{figure}
    \centering
    \includegraphics[width=0.4\linewidth]{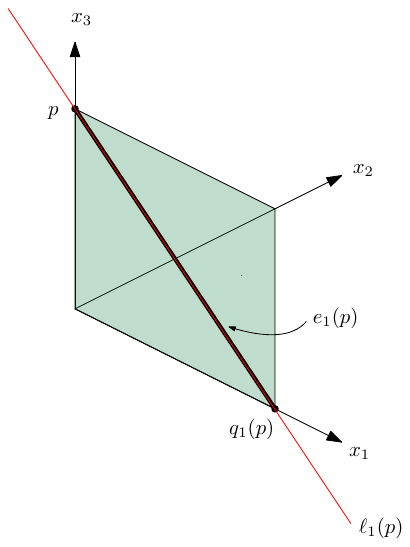}
    \caption{An illustration of extreme points, edges, and adjacent extreme points. Here $S_1 = \{ x \in \R^3 \mid -x_1 \le 0$, $S_2 = \{x \in \R^3 \mid -x_2 \le 0\}$, $S_3 = \{x \in \R^3 \mid -x_3 \le 0\}$, and $S_4 = \{ x \in \R^3 \mid x_1 + x_2 + x_3 \le 1\}$.}
    \label{fig:edge-inspiration}
\end{figure}

We call the convex cone $\mathcal S(p) \triangleq \cap_{\alpha\in A(p)} S_\alpha$ the \emph{support cone} at extreme point $p \in E$ and $\mathcal R(p) \triangleq \{d\in X \mid \phi_\alpha (p+\lambda d)\leq b_\alpha \mbox{ for all } \lambda \ge 0 \mbox{ and } \alpha\in A(p)\}$ the \emph{recession cone} at extreme point $p \in E$. Note that $\mathcal S(p)$ has its apex at $p$, whereas $\mathcal R(p)$ has its apex at the origin $0$.

\begin{lemma}\label{lemma:redef-recession-cone}
The recession cone $\mathcal R(p)= \{ d\in X  \mid  \phi_\alpha (d)\leq 0 \mbox{ for all }\alpha \in A(p)\}$.    
\end{lemma}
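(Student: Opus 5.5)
We will show the two inclusions directly, using only the linearity of each $\phi_\alpha$ and the fact that every $\alpha \in A(p)$ is tight at $p$, i.e. $\phi_\alpha(p) = b_\alpha$. For a fixed $\alpha\in A(p)$, $\lambda \ge 0$ and $d \in X$, linearity gives
\[
\phi_\alpha(p+\lambda d) = \phi_\alpha(p) + \lambda \phi_\alpha(d) = b_\alpha + \lambda\phi_\alpha(d).
\]
Hence the defining inequality $\phi_\alpha(p+\lambda d)\le b_\alpha$ of $\mathcal R(p)$ is equivalent to $\lambda \phi_\alpha(d) \le 0$. The whole proof is a reading of this identity in each direction.

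For the inclusion $\supseteq$, take $d$ with $\phi_\alpha(d)\le 0$ for all $\alpha\in A(p)$. Then for every $\lambda\ge 0$ and every $\alpha \in A(p)$ we have $\lambda\phi_\alpha(d)\le 0$, so $\phi_\alpha(p+\lambda d)\le b_\alpha$. Thus $d\in\mathcal R(p)$.

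For the inclusion $\subseteq$, take $d\in\mathcal R(p)$ and fix $\alpha\in A(p)$. Choosing $\lambda = 1$ in the definition of $\mathcal R(p)$ gives $b_\alpha + \phi_\alpha(d) \le b_\alpha$, that is, $\phi_\alpha(d)\le 0$. Since $\alpha \in A(p)$ was arbitrary, $d$ lies in the right-hand set.

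There is no real obstacle. The only point needing care is that tightness $\phi_\alpha(p)=b_\alpha$ holds exactly for the indices in $A(p)$, and these are the only indices appearing in the definition of $\mathcal R(p)$. Neither the slack constraints nor \cref{ass:compact,ass:support-conditions,ass:constraints-bounded} are used, so the lemma holds at any $p\in\mathcal P$, not only at extreme points.
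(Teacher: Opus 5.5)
Your proposal is correct and follows the paper's proof: both inclusions come from the identity $\phi_\alpha(p+\lambda d) = b_\alpha + \lambda\phi_\alpha(d)$ for $\alpha \in A(p)$. Your explicit choice of $\lambda = 1$ in the forward inclusion makes precise a step the paper leaves implicit when it passes from ``for all $\lambda \ge 0$'' to $\phi_\alpha(d) \le 0$.
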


\cref{lemma:redef-recession-cone} tells us that $\mathcal R(p)= \mathcal S(p) - p$; that is, $\mathcal R(p)$ is $\mathcal S(p)$ translated to the origin. 

Note that every $d\in \mathcal R(p)$ with $d\ne 0$ generates a half-line or {\em ray} $r$ of $\mathcal R(p)$, i.e.  $r=\{\lambda d, \lambda \geq 0\}$.
A ray $r$ is is an {\em extreme ray} of a convex cone $C$  with vertex 0  if $x\in r, y\in C$ and $x-y\in C$ implies $y\in r$ \cite[p. 72]{schaefer1971topological}.
A half-line $r\in X$ is a ray or extreme ray of a convex cone $C$ with vertex $p$ if $r-p$ is a ray or, respectively extreme ray, of $C-p$.
Let $\eta_{\alpha}(p)$ for $\alpha\in A(p)$ be the ray in $\mathcal R(p)$ given by $\eta_{\alpha}\triangleq \ell_\alpha(p)\cap S_\alpha-p.$ Note that $\eta_\alpha(p)\ne \{0\}$.

\begin{lemma}\label{lemma:extreme-rays}
If \cref{ass:compact,ass:support-conditions,ass:constraints-bounded} hold then rays $\eta_{\alpha}(p),  \alpha\in A(p)$ are the extreme rays of $\mathcal R(p)$.
\end{lemma}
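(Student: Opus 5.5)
Two things need showing: each $\eta_\alpha(p)$ is an extreme ray of $\mathcal R(p)$, and every extreme ray of $\mathcal R(p)$ equals some $\eta_\alpha(p)$. Throughout I use \cref{lemma:redef-recession-cone}, so $\mathcal R(p)=\{d \mid \phi_\alpha(d)\le 0,\ \alpha\in A(p)\}$. I also use \cref{thm:characterize-EP}: since $p\in E$, we have $\mathcal H(p)=\{p\}$, so the only $d$ with $\phi_{\alpha}(d)=0$ for all $\alpha\in A(p)$ is $d=0$.

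\textbf{Setting up a generator.} Fix $\alpha\in A(p)$. By \cref{ass:support-conditions}(a), $\ell_\alpha(p)$ is a line through $p$; pick a direction $d_\alpha\neq 0$ with $\ell_\alpha(p)=p+\R d_\alpha$. We have $\phi_\alpha(d_\alpha)\neq 0$, since otherwise $d_\alpha$ would annihilate all of $\{\phi_{\alpha'}\}_{\alpha'\in A(p)}$ and $\mathcal H(p)$ would not be a singleton. Flip the sign so that $\phi_\alpha(d_\alpha)<0$. Then $\eta_\alpha(p)=\{\lambda d_\alpha \mid \lambda\ge 0\}$. It lies in $\mathcal R(p)$ because $\phi_{\alpha'}(d_\alpha)=0$ for $\alpha'\ne\alpha$ and $\phi_\alpha(d_\alpha)<0$.

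\textbf{Each $\eta_\alpha(p)$ is extreme.} Let $x=\lambda d_\alpha\in\eta_\alpha(p)$, and let $y\in\mathcal R(p)$ with $x-y\in\mathcal R(p)$. For $\alpha'\ne\alpha$ in $A(p)$:
\[
0=\phi_{\alpha'}(x)=\phi_{\alpha'}(y)+\phi_{\alpha'}(x-y),
\]
and both terms are $\le 0$. Hence $\phi_{\alpha'}(y)=0$ for all such $\alpha'$, so $p+y\in\ell_\alpha(p)$ and $y=\mu d_\alpha$ for some $\mu\in\R$. Since $\phi_\alpha(y)\le 0$ and $\phi_\alpha(d_\alpha)<0$, we get $\mu\ge0$, i.e.\ $y\in\eta_\alpha(p)$.

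\textbf{Every extreme ray is some $\eta_\alpha(p)$.} Let $r=\{\lambda d\mid\lambda\ge 0\}$ be an extreme ray with $d\ne 0$, and let $T=\{\alpha\in A(p)\mid \phi_\alpha(d)<0\}$. $T$ is nonempty because $\mathcal H(p)=\{p\}$ and $d\neq 0$. If $T=\{\alpha\}$, then $d$ annihilates all $\phi_{\alpha'}$ with $\alpha'\ne\alpha$, so $d\in\ell_\alpha(p)-p$ with $\phi_\alpha(d)<0$, giving $r=\eta_\alpha(p)$.

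So suppose $T$ contains two indices $\alpha\neq\beta$. The plan is to decompose $d$ nontrivially: find $t>0$ such that both $y=t d_\alpha$ and $d-y$ lie in $\mathcal R(p)$. Since $y\notin r$ (else $d$ would be a multiple of $d_\alpha$, giving $\phi_\beta(d)=0$), this contradicts extremality. We have
\[
\phi_{\alpha'}(d-y)=\phi_{\alpha'}(d)\le 0 \quad\text{for } \alpha'\ne\alpha,
\qquad
\phi_\alpha(d-y)=\phi_\alpha(d)-t\,\phi_\alpha(d_\alpha).
\]
The latter is $\le 0$ exactly when $0<t\le \phi_\alpha(d)/\phi_\alpha(d_\alpha)$, and this upper bound is a positive number. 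So the decomposition exists.

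The main subtlety is exactly this last step. Note it needs only that the single line $\ell_\alpha(p)$ exists, not that $d$ be a finite combination of the $d_\alpha$. If some hidden normalization issue arises, I would first check that $\phi_\alpha(d_\alpha)$ is finite and nonzero, which was established above. \cref{ass:compact,ass:constraints-bounded} enter only through \cref{thm:characterize-EP}.
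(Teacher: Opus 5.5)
Your proof is correct and follows the paper's argument. The first direction is the same sign-squeeze on $\phi_{\alpha'}(y)+\phi_{\alpha'}(x-y)=0$ for $\alpha'\neq\alpha$, and the converse likewise peels a small positive multiple of an edge generator off $d$ to contradict extremality. The paper differs in two small ways: it subtracts a convex combination $d'$ of generators from two rays $\eta_{\alpha_1}(p),\eta_{\alpha_2}(p)$, and it tests extremality with $d-\lambda d'$. You use a single generator $d_\alpha$ and test with $t\,d_\alpha$, which is slightly cleaner and makes explicit the fact $\phi_\alpha(d_\alpha)\neq 0$ that the paper uses only tacitly.
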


Since $\mathcal R(p)$ is $\mathcal S(p)-p$,  we have the following theorem from \cref{lemma:extreme-rays}.

\begin{lemma}
If \cref{ass:compact,ass:support-conditions,ass:constraints-bounded} hold then rays $ r_\alpha (p)= \eta_\alpha(p) + p, \alpha \in A(p)$ are the extreme rays of $\mathcal S(p)$.
\end{lemma}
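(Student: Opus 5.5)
The plan is to obtain this as a direct translation of \cref{lemma:extreme-rays} under the map $x \mapsto x - p$. The only ingredients are \cref{lemma:redef-recession-cone} and the paper's definition of (extreme) rays of a cone with vertex $p$.

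First, I would record that $\mathcal S(p) - p = \mathcal R(p)$. For $x \in X$, we have $x \in \mathcal S(p)$ if and only if $\phi_\alpha(x) \le b_\alpha = \phi_\alpha(p)$ for all $\alpha \in A(p)$. This holds if and only if $\phi_\alpha(x-p) \le 0$ for all $\alpha \in A(p)$, which by \cref{lemma:redef-recession-cone} is equivalent to $x - p \in \mathcal R(p)$. Hence $\mathcal S(p)$ is a convex cone with vertex $p$, and translating it by $-p$ gives exactly $\mathcal R(p)$.

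Second, I would apply the definition. A half-line $r$ is an extreme ray of the cone $\mathcal S(p)$ with vertex $p$ precisely when $r - p$ is an extreme ray of $\mathcal S(p) - p = \mathcal R(p)$. By \cref{lemma:extreme-rays}, which holds under \cref{ass:compact,ass:support-conditions,ass:constraints-bounded}, the extreme rays of $\mathcal R(p)$ are exactly the rays $\eta_\alpha(p)$, $\alpha \in A(p)$.

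It then remains to check that $r_\alpha(p) - p = \eta_\alpha(p)$, which is immediate from $r_\alpha(p) = \eta_\alpha(p) + p$. This gives both directions. Each $r_\alpha(p)$ is an extreme ray of $\mathcal S(p)$. Conversely, any extreme ray $r$ of $\mathcal S(p)$ has $r - p = \eta_\alpha(p)$ for some $\alpha$, so $r = r_\alpha(p)$.

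There is no real obstacle. The only point needing care is that the vertex-$p$ notion of extreme ray matches the translated one, and this is exactly how the paper defines it. One might also remark that $r_\alpha(p) = \ell_\alpha(p) \cap S_\alpha$, since $\eta_\alpha(p) = \ell_\alpha(p) \cap S_\alpha - p$, but the statement does not require this.
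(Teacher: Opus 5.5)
Your proposal is correct and follows the paper's own argument. The paper also derives this lemma immediately from \cref{lemma:extreme-rays}, using the identity $\mathcal R(p) = \mathcal S(p) - p$ (a consequence of \cref{lemma:redef-recession-cone}) together with its definition of extreme rays of a cone with vertex $p$ via translation; you simply write out the translation step and both directions explicitly.
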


\begin{theorem}[Adjacent extreme points, $q_\alpha(p)$]\label{theorem:adjacent-extreme-points} Suppose \cref{ass:compact,ass:support-conditions,ass:constraints-bounded} hold. For each $\alpha \in A(p)$, let $q_\alpha(p) \triangleq p + \lambda^* t$ where $\lambda^* =  \min_{\lambda \ge 0} \{\lambda | p + \lambda r_\alpha(p) \in \cap_{\beta \in A'(p)} S_\beta\}$ where $A'(p) \triangleq \{ \beta \in A \setminus A(p) \mid r_\alpha(p) \cap H_\beta \neq \emptyset\}$ and $t$ is some point in the half-line $r_\alpha(p)$. Then $q_\alpha(p)$ is an extreme point of $\mathcal P$.  
\end{theorem}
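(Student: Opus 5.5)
We show that $q \triangleq q_\alpha(p)$ lies in $\mathcal P$ and that $\mathcal H(q)=\{q\}$; \cref{prop:intersect-then-extreme} then makes $q$ an extreme point. The statement is loosely written, so we first fix its meaning. Write the half-line as $r_\alpha(p)=\{p+\lambda d \mid \lambda\ge 0\}$, where $d\neq 0$ spans $\eta_\alpha(p)$. Set
\[
\lambda^*=\sup\{\lambda\ge 0 \mid p+\lambda d\in \mathcal P\}, \qquad q=p+\lambda^* d,
\]
which is the first point on the half-line where some constraint in $A'(p)$ becomes tight.

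\textbf{Step 1: $\lambda^*$ is finite, positive, and attained.}
\begin{itemize}
\item Every constraint in $A(p)\setminus\{\alpha\}$ stays tight along the line $\ell_\alpha(p)$, and $S_\alpha$ stays satisfied on the half-line by the definition of $\eta_\alpha(p)$. So only the constraints $\beta\in A\setminus A(p)$ can cut the half-line.
\item Finiteness: by \cref{ass:compact}, $\mathcal P$ is linearly bounded, exactly as in the proof of \cref{prop:extreme-then-intersect}. Hence some $\beta$ with $\phi_\beta(d)>0$ exists, $A'(p)\neq\emptyset$, and $\lambda^*<\infty$.
\item Positivity: I want to reuse the slack/ratio bound from the proof of \cref{prop:extreme-then-intersect},
\[
\lambda^*\ge \inf_{\beta} \frac{s_\beta(p)}{\phi_\beta(d)} \ge \frac{\rho}{\sup_{\beta\in A\setminus A(p)}\phi_\beta(d)}>0 .
\]
\cref{ass:constraints-bounded} is stated only for directions in $\mathcal H(p)-p=\{0\}$, so it does not apply to $d$ directly. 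I would argue that the supremum is finite anyway, either via compactness or by reading \cref{ass:constraints-bounded} as intended for edge directions. This is a gap to flag.
\item Attainment: $\mathcal P\cap r_\alpha(p)$ is closed, so the supremum is achieved and $q\in\mathcal P$.
\end{itemize}

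\textbf{Step 2: a constraint becomes tight at $q$.} The aim is some $\beta\in A'(p)$ with $\beta\in A(q)$. If no constraint were tight at $q$, then every $\beta$ with $\phi_\beta(d)>0$ would have positive slack at $q$.
\begin{itemize}
\item If $q$ is an extreme point, \cref{ass:support-conditions}(b) gives slack at least $\rho$, which prevents $\lambda^*$ from being a supremum.
\item We do not yet know that $q$ is extreme, so this is circular. Instead, use the phantom-constraint exclusion that \cref{ass:support-conditions}(b) is intended to encode. Take a sequence $\beta_n$ with tight points $\lambda_{\beta_n}\downarrow\lambda^*$ and argue the infimum is attained, so $\beta\in A(q)$.
\end{itemize}
This attainment step is the main obstacle. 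I would try to handle it by showing that $\{\lambda_\beta\}$ has a minimum, using the bounded-ratio estimate of Step 1 translated to $q$.

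\textbf{Step 3: $\mathcal H(q)=\{q\}$.} We have $A(q)\supseteq (A(p)\setminus\{\alpha\})\cup\{\beta\}$. Hence
\[
\mathcal H(q)\subseteq \ell_\alpha(p)\cap H_\beta .
\]
Here $\ell_\alpha(p)$ is a line by \cref{ass:support-conditions}(a), and it is not contained in $H_\beta$ because $p\notin H_\beta$. By \cref{lemma:0}, the intersection is the single point $q$. Then \cref{prop:intersect-then-extreme}, or \cref{thm:characterize-EP}, shows that $q$ is an extreme point.
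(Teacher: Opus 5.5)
Your outline follows the paper's proof: take the first point $q$ where the edge ray leaves $\mathcal P$, find a newly tight $\beta\in A'(p)$ there, and conclude $\mathcal H(q)\subseteq \ell_\alpha(p)\cap H_\beta=\{q\}$ via \cref{lemma:0}. Step~3 is fine. The genuine gap is Step~2, and the repair you sketch cannot work. It needs $\sup_{\beta\in A\setminus A(p)}\phi_\beta(d)<\infty$, and neither \cref{ass:constraints-bounded} (as you note, it only covers $r\in\mathcal H(p)-p=\{0\}$) nor compactness provides this.

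For example, in $\R^2$ take the constraints $-x_1\le 0$, $-x_2\le 0$, $x_1\le 1$, $x_2\le 1$, and $nx_1-nx_2\le 1$ for $n=1,2,\dots$. Then $\mathcal P=\{0\le x_1\le x_2\le 1\}$ has extreme points $(0,0)$, $(0,1)$, $(1,1)$. Each has exactly two active constraints whose lines meet only there, and every inactive constraint has slack at least $1$ at every extreme point. So \cref{ass:compact,ass:support-conditions,ass:constraints-bounded} hold. Yet at $p=(0,0)$, with $\alpha$ the constraint $-x_1\le 0$ and $d=(1,0)$, the hitting points $(1/n,0)$ accumulate at $p$ and $\mathcal P\cap r_\alpha(p)=\{p\}$. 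Thus $\lambda^*=0$, no constraint in $A'(p)$ is tight at $q=p$, and both your positivity claim and Step~2 fail.

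The paper's proof passes over the same two points. Closedness of $\bigcap_{\beta\in A'(p)}S_\beta$ does not produce a tight hyperplane. And \cref{ass:constraints-bounded} is applied to a direction outside $\mathcal H(p)-p$.

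The missing idea is that extremality of $q$ needs no tight constraint at all. Each $H_{\alpha'}$ with $\alpha'\in A(p)$ supports $\mathcal P$, so $\mathcal P\cap H_{\alpha'}$ is a face, by the argument of \cref{prop:intersect-then-extreme}. Hence $F=\mathcal P\cap\bigcap_{\alpha'\in A(p)\setminus\{\alpha\}}H_{\alpha'}=\mathcal P\cap\ell_\alpha(p)$ is also a face. It is a closed, linearly bounded convex subset of the line $\ell_\alpha(p)$.

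Since $\ell_\alpha(p)\cap H_\alpha=\mathcal H(p)=\{p\}$, $\phi_\alpha$ is non-constant on that line and attains its maximum $b_\alpha$ over $F$ at $p$. So $F=[p,q]$ with $q$ exactly as you defined it. If $q=\lambda y+(1-\lambda)z$ with $y,z\in\mathcal P$ and $\lambda\in(0,1)$, the face property gives $y,z\in[p,q]$, which forces $y=z=q$. Hence $q\in E$, proving the statement, degenerate case $q=p$ included.

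Your Step~2 then holds a posteriori whenever $q\neq p$, with the circularity removed. \cref{thm:characterize-EP} applied at the now-known extreme point $q$ gives $\mathcal H(q)=\{q\}$. But $\alpha\notin A(q)$ and $\bigcap_{\alpha'\in A(p)\setminus\{\alpha\}}H_{\alpha'}$ is a line, so some $\beta\in A\setminus A(p)$ must be tight at $q$, and automatically $\beta\in A'(p)$.

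What \cref{ass:compact,ass:support-conditions,ass:constraints-bounded} cannot deliver is $q\neq p$. That needs an additional hypothesis, which is in effect what \cref{ass:edge-lengths-bounded-away-from-zero} later imposes.
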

\begin{proof}
The set $A'(p)$ indexes the set of hyperplanes indexed by $A \setminus A(p)$ that intersect with $r_\alpha(p)$. These intersections are points by \cref{lemma:0}. Observe that $q_\alpha(p)$ is the closest point to $p$ of intersections of hyperplanes in $A'(p)$ with $r_\alpha(p)$. This closest point $q_\alpha(p)$ is the intersection of $r_\alpha(p)$ with the boundary of $\cap_{\alpha\in A'(p)} S_\alpha$---which being the boundary of the intersection of a collection of closed halfspaces (since $\phi_\alpha, \alpha \in A'(p) $ are continuous linear functions)---is closed. Note that there must be at least one point of intersection since $\mathcal P$ is linearly bounded. This closest point must be distinct from $p$ by \cref{ass:support-conditions,ass:constraints-bounded} and the proof of \cref{prop:extreme-then-intersect}. Note that $q_\alpha (p)$ is an extreme point of $\mathcal P$ from \cref{thm:characterize-EP} since it is the unique point contained in the constraining hyperplanes passing through it.
\end{proof}

The extreme points $\{q_\alpha(p) \mid \alpha \in A(p)\}$ are called the {\em adjacent} extreme points of $p$. The line segment $e_\alpha(p)=q_\alpha(p)-p$ joining $q_\alpha(p)$ and $p$ is called an {\em edge direction} of $\mathcal P$ for $\alpha\in A(p)$.  The set $\mathcal E(p)$ of all edge directions out of $p$ is given by  $\mathcal E(p)= \cup_{\alpha\in A(p)} e_\alpha (p)$ and the set $\mathcal E$ of all edges in $\mathcal P$  is given by 
$\mathcal E= \cup_{p\in E}\mathcal E(p)$.

\subsection{Edges from an extreme point form a Schauder basis}

We now establish some key properties on the set of edges emanating from an extreme point that are essential in defining our simplex method. In particular, we will describe how these edge directions can be used to fashion a Schauder basis for the space and the Schauder basis coefficients are nonnegative when generating elements in the feasible region $\mathcal P$. This machinery is essential for showing the convergence of the simplex method we define in the next section. First, an assumption. 

\begin{assumption}[Countably many constraints]\label{ass:countably-many-constraints} 
The set $A$ is countable. In this case, we let $k$ be the index of $A$ and express $\mathcal P=\cap_{k=1}^\infty S_k$. In other words, from now, we use the index $k$ instead of the index $\alpha$.
\end{assumption}

\begin{remark}
One sufficient condition for \cref{ass:countably-many-constraints} to hold is that $X$ is a separable Banach space \cite[Proposition 7.6.5]{borwein2010convex}). In this case, even if $\mathcal P$ was expressed with more than countably many constraints, there exists a countable set of constraints that also describes $\mathcal P$.  
\end{remark}

\begin{remark}\label{rem:labeling-workaround}
We slightly abuse notation in our indexing of the constraints $\phi_k(x)$. We will re-index the set of active constraints at $p$ by $\phi_k(\cdot; p)$. In other words, we relabel the elements of $A(p)$ by $k = 1, 2, \dots$ and $\phi_k(\cdot; p)$ should be understood as the $k$ active constraint at $p$. 
\end{remark}

\begin{example}[Hilbert cube, continued]\label{ex:hilbert-cube-countably-many-constraints}
The index set $A = \{1,2,\dots\} \cup \{-1, -2, \dots\}$ of the Hilbert cube is countable, and so satisfies \cref{ass:countably-many-constraints}.
\end{example}

For the following development, refer to \cref{fig:Schauder-basis}.%
\begin{figure}
    \centering
    \includegraphics[width=0.5\linewidth]{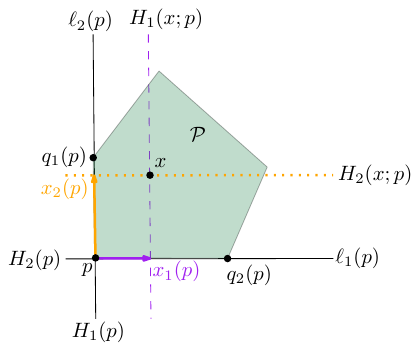}
    \caption{Visualizing the construction of the Schauder basis at an extreme point.}
    \label{fig:Schauder-basis}
\end{figure}
Let $p$ be an extreme point of a polytope $\mathcal P$. We begin by assuming $p=0$. Fix $x\in \mathcal P$ and let $x_k(0)$ be the vector along the $k$th edge out of $0$ given by
\begin{equation}\label{eq:define-x-sub-k}
x_k(0) \triangleq \ell_k(0)\cap H_k(x;0) 
\end{equation}
where $\ell_k(0)$ is the line defined in \cref{eq:emenating-line} and $H_k(x;0)=\{y\in X | \phi_k(y;0)=\phi_k(x;0)\}$ and $\phi_k(\cdot; 0)$.

We claim that
\begin{equation}\label{eq:little-to-show}
x_k(0)=\frac {\phi_k(x;0)} {\phi_k(q_k(0);0)} q_k(0)
\end{equation}
by showing that the right-hand side of \cref{eq:little-to-show} is in $\ell_k(0) \cap H_k(x;0)$ and is thus equal to $x_k(0)$ by the definition in \cref{eq:define-x-sub-k}.  The right-hand side of \cref{eq:little-to-show} is clearly in $\ell_k(0)$. Moreover, note that  
\[\phi_k\left(\frac {\phi_k(x;0)} {\phi_k(q_k(0);0)} q_k(0);0\right) =(\phi_k(x;0)/\phi_k(x_k(0);0))\phi_k(x_k(0);0)=\phi_k(x;0)\] 
and so the right-hand side of \cref{eq:little-to-show} is in $H_k(x;0)$. This establishes \cref{eq:little-to-show}. 

Define $x^n(0)\triangleq \sum_{k=1}^n x_k(0)$. 

\begin{lemma}\label{lemma:x-sub-k-get-more-feasible}
If \cref{ass:compact,ass:support-conditions,ass:countably-many-constraints,ass:constraints-bounded} hold then the sequence $x_n(0) \in \cap_{k=1}^n H_k(x;0)$ for all $n = 1,2,\dots$.
\end{lemma}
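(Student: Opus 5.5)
The plan is to read the claim as $x^n(0)=\sum_{k=1}^n x_k(0)\in\bigcap_{k=1}^n H_k(x;0)$. That is, for every $j\le n$ we want $\phi_j(x^n(0);0)=\phi_j(x;0)$. The argument is essentially a biorthogonality computation: the edge vectors $x_k(0)$ are "dual" to the active functionals $\phi_j(\cdot;0)$. By linearity it then suffices to evaluate $\phi_j(x_k(0);0)$ for each pair $j,k$.

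First I would record the consequences of the normalization $p=0$. Every constraint $j\in A(0)$ is tight at $0$, so $b_j=\phi_j(0;0)=0$. Hence each active hyperplane $H_j$ is a linear subspace $\{y\mid \phi_j(y;0)=0\}$. By \cref{ass:support-conditions}(a), $\ell_k(0)=\bigcap_{j\in A(0),\,j\neq k}H_j$ is a line through $0$, and therefore a one-dimensional linear subspace spanned by $q_k(0)$.

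Next I would check that \cref{eq:little-to-show} is well defined, i.e. that $\phi_k(q_k(0);0)\neq 0$. By \cref{theorem:adjacent-extreme-points}, $q_k(0)\neq 0$ and $q_k(0)\in\ell_k(0)$. If also $\phi_k(q_k(0);0)=0$, then $q_k(0)$ would lie in every active hyperplane, so $q_k(0)\in\mathcal H(0)$. But $\mathcal H(0)=\{0\}$ by \cref{thm:characterize-EP} (this is where \cref{ass:compact,ass:support-conditions,ass:constraints-bounded} enter), which is a contradiction. So $x_k(0)$ is a well-defined scalar multiple of $q_k(0)$, and in particular it lies in $\ell_k(0)$.

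The core computation then has two cases.
\begin{itemize}
\item If $j\neq k$, then $x_k(0)\in\ell_k(0)\subseteq H_j$, so $\phi_j(x_k(0);0)=0$.
\item If $j=k$, then $x_k(0)\in H_k(x;0)$, so $\phi_k(x_k(0);0)=\phi_k(x;0)$, as already verified after \cref{eq:little-to-show}.
\end{itemize}
Combining these by linearity, for $j\le n$,
\[
\phi_j(x^n(0);0)=\sum_{k=1}^n\phi_j(x_k(0);0)=\phi_j(x_j(0);0)=\phi_j(x;0),
\]
so $x^n(0)\in H_j(x;0)$ for each $j\le n$, which is the claim.

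The main obstacle is the bookkeeping rather than any analytic difficulty. One issue is the relabeling convention of \cref{rem:labeling-workaround}: the indices $k=1,\dots,n$ must refer to active constraints at $0$, so that $b_k=0$ and the lines $\ell_k(0)$ are those of \cref{eq:emenating-line}. The other is ruling out $\phi_k(q_k(0);0)=0$, which I would handle through \cref{thm:characterize-EP} as above. \cref{ass:countably-many-constraints} is used only to make the enumeration $k=1,2,\dots$ meaningful.
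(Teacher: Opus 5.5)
Your proposal is correct and follows the paper's proof. Both read the claim as concerning the partial sum $x^n(0)$, establish the biorthogonality relations $\phi_i(x_k(0);0)=0$ for $i\neq k$ (since $x_k(0)\in\ell_k(0)$ lies in every other active hyperplane, all of which pass through the origin) and $\phi_i(x_i(0);0)=\phi_i(x;0)$, and then sum by linearity. Your extra check that $\phi_k(q_k(0);0)\neq 0$, via $\mathcal H(0)=\{0\}$, is a well-definedness detail that the paper leaves implicit.
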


\begin{lemma}\label{lemma:technical-convergence-lemma}
Let $y\in X$. If $\cap_{k=1}^\infty H_k(y;0) = \{y\}$ and $\{y^n\}$ is a convergent sequence with $y^n\in \cap_{k=1}^n H_k(y;0)$ for $n=1,2,\ldots$,  then $y^n \rightarrow y$ as $n\rightarrow \infty$.
\end{lemma}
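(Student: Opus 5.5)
Since the statement assumes $\{y^n\}$ converges, I will let $z \triangleq \lim_{n\to\infty} y^n$ and show that $z\in\cap_{k=1}^\infty H_k(y;0)$. The hypothesis $\cap_{k=1}^\infty H_k(y;0)=\{y\}$ then forces $z=y$, which gives $y^n\to y$. This identification of the limit is the whole argument, so the steps below establish membership of $z$ in each hyperplane.

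First fix an arbitrary index $k$. For every $n\ge k$ we have $y^n\in\cap_{j=1}^n H_j(y;0)\subseteq H_k(y;0)$, so the tail $\{y^n\}_{n\ge k}$ lies entirely in $H_k(y;0)=\{x\in X\mid \phi_k(x;0)=\phi_k(y;0)\}$. Next I use closedness: $\phi_k(\cdot;0)$ is a continuous linear functional, since it is one of the original $\phi_\alpha\in X^*$ relabeled as in \cref{rem:labeling-workaround}. Hence $H_k(y;0)$ is the preimage of the closed singleton $\{\phi_k(y;0)\}$ and is closed in $\tau$. A convergent sequence eventually contained in a closed set has its limit in that set, because $X$ is Hausdorff and limits are unique. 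Concretely, $\phi_k(z;0)=\lim_n\phi_k(y^n;0)=\phi_k(y;0)$, so $z\in H_k(y;0)$.

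Since $k$ was arbitrary, $z\in\cap_{k=1}^\infty H_k(y;0)=\{y\}$, so $z=y$. By uniqueness of limits in the Hausdorff space $X$, $y^n\to y$.

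The only subtle point is the convergence hypothesis. If $\{y^n\}$ were not assumed convergent, nothing would prevent it from wandering, because the nested sets $\cap_{j\le n}H_j$ are unbounded affine subspaces. That is precisely why the lemma assumes convergence, and why I expect the later application of this lemma, for example to $x^n(0)$ via \cref{lemma:x-sub-k-get-more-feasible}, to have to obtain convergence separately, perhaps from compactness in \cref{ass:compact} or a subsequence argument. Within the present statement, though, I foresee no real obstacle beyond verifying closedness of each $H_k(y;0)$ and passing to the limit along the tails. If the topology were not sequential this would still be fine, since the argument only concerns the limit of a given sequence.
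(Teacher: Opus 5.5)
Your proof is correct and is essentially the paper's argument. You both fix $z=\lim y^n$, show $z\in H_k(y;0)$ for each $k$ because the tail $\{y^n\}_{n\ge k}$ lies in the closed set $H_k(y;0)$, and conclude $z=y$ from $\cap_k H_k(y;0)=\{y\}$; the paper phrases the closedness step as a contradiction with an open neighborhood of $z$ disjoint from $H_k(y;0)$, while you pass to the limit through $\phi_k(\cdot;0)$. One small quibble: a limit of a sequence eventually contained in a closed set lies in that set in any topological space, so Hausdorffness is needed only to speak of \emph{the} limit $z$, not for that step.
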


To ensure our Schauder basis construction converges, we need an additional assumption.

\begin{assumption}[Compactness of the set of partial sums]\label{ass:partial-sums-compact}
For all $p \in \mathcal P$ and $x \in X$, the set $\{x^n(0)\}$ lies in a compact set (which may depend on $x$ and $p$, but not on $n$).
\end{assumption}

\begin{remark}\label{rem:convergence-enough}
A sufficient condition for \cref{ass:partial-sums-compact} to hold is that the sequence $x^n(p), n=1,2,\ldots$ be convergent since then we can set the containing compact set equal to the closure of $ \{x^1(p), x^2(p), \ldots\}.$ This is a necessary as well as sufficient condition for \cref{ass:partial-sums-compact} since $x^n(p), n=1,2,\ldots$ is convergent under \cref{ass:partial-sums-compact} as we show in \cref{prop:schauder-basis-theta} below. Also, since $\phi_k(x;p)$ and $\phi_k(q_k(p);p)$ are respectively  bounded from above and below, a sufficient condition is that  $\sum_{k=1}^\infty \|q_k(p)\|$ is absolutely convergent and $X$ is some Banach space with norm $\|\cdot\|$. $\blacktriangleleft$
\end{remark}

\begin{remark}
In the case where $X$ is a reflexive Banach space (such as a Hilbert space or $L_p$ space with $1 < p < \infty$), the condition of compactness can be relaxed to uniform boundedness. This yields the (weakly) convergent subsequence needed in the proof of \cref{prop:schauder-basis-theta} below, which carries through to the rest of the paper. 
\end{remark}

The following example exhibits another approach to establishing \cref{ass:partial-sums-compact}.

\begin{example}[Hilbert cube, continued]\label{ex:hilbert-cube-partial-sums-compact}
We claim that $\mathscr H$ satisfies \cref{ass:partial-sums-compact}. Fix the extreme point $p=0$ (the argument is analogous for other extreme points) and note that $q_k(0)=e_k$ (the $k$th unit vector). Accordingly, for any $x=(x_j)\in \mathscr H$, $x_k(0) = x_k e_k$ and so the associated $k$th partial sum
\[
x^n(0) = (x_1,\dots,x_n,0,0,\ldots).
\]
Each $x^n(0)\in \mathscr H$ since all coordinates lie in $[0,1]$. Moreover,
\[
\|x-x^n(0)\|_X^2
=\sum_{k=n+1}^\infty \delta_k^2 x_k^2 \;\longrightarrow\; 0
\quad\text{as }n\to\infty,
\]
because $x\in X$ implies $\sum_{k=1}^\infty \delta_k^2 x_k^2<\infty$. Therefore $x^n(0)\to x$ , so the set $\{x_n(0):n\ge 1\}$ is closed and thus compact since it lies in the compact set $\mathscr H$. This establishes \cref{ass:partial-sums-compact}. $\blacktriangleleft$
\end{example}

\begin{proposition}[Schauder basis formed at origin]\label{prop:schauder-basis-theta} Suppose
\cref{ass:compact,ass:support-conditions,ass:constraints-bounded,ass:countably-many-constraints,ass:partial-sums-compact} hold. Let $x\in \mathcal P$ and extreme point $p=0$. The sequence $x^n(0) \rightarrow x$ as $n\rightarrow \infty$, that is, 
$x=\sum_{k=1}^\infty\frac {\phi_k(x;0)} {\phi_k(q_k(0);0)}\, q_k(0)$
and $\{q_k(0), k=1,2,\ldots\}$ is a Schauder basis for the elements of $\mathcal P$.
\end{proposition}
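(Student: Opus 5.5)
The plan is to combine \cref{ass:partial-sums-compact} with \cref{lemma:x-sub-k-get-more-feasible,lemma:technical-convergence-lemma} through a subsequence argument, and then read off the series representation from \cref{eq:little-to-show}.

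First, fix $x\in\mathcal P$ and the extreme point $p=0$. By \cref{ass:partial-sums-compact}, the partial sums $\{x^n(0)\}$ lie in a compact set $K$. By \cref{lemma:x-sub-k-get-more-feasible}, interpreted for the partial sums, we have $x^n(0)\in\cap_{k=1}^n H_k(x;0)$ for every $n$. Concretely, each summand $x_j(0)$ with $j\neq k$ lies on $\ell_j(0)\subseteq H_k(0;0)$, so it contributes $0$ to $\phi_k(\cdot;0)$, while $x_k(0)$ contributes exactly $\phi_k(x;0)$. Hence $\phi_k(x^n(0);0)=\phi_k(x;0)$ for all $n\ge k$.

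Second, I need $\cap_{k=1}^\infty H_k(x;0)=\{x\}$, which is the hypothesis of \cref{lemma:technical-convergence-lemma}. Since $0$ is an extreme point, \cref{thm:characterize-EP} gives $\mathcal H(0)=\cap_k H_k(0;0)=\{0\}$. Now $\cap_k H_k(x;0)$ is the translate $x+\cap_k\{y:\phi_k(y;0)=0\}$, and the latter set is exactly $\mathcal H(0)-0$, because each $b_k=\phi_k(0)$ for active $k$. So the intersection is $\{x\}$.

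Third comes the main obstacle. \cref{lemma:technical-convergence-lemma} requires a convergent sequence, but \cref{ass:partial-sums-compact} only gives compactness. I will therefore argue through subnets or subsequences. Every subnet of $\{x^n(0)\}$ has a further subnet converging to some $z\in K$. For each fixed $k$, eventually $\phi_k(x^{n}(0);0)=\phi_k(x;0)$, and continuity of $\phi_k$ gives $\phi_k(z;0)=\phi_k(x;0)$. Hence $z\in\cap_k H_k(x;0)=\{x\}$. This is exactly the content of \cref{lemma:technical-convergence-lemma} applied along the subsequence, and I will reprove it inline if it is only stated for full sequences. Because every subnet has a further subnet converging to the same point $x$, in a compact Hausdorff set the whole sequence converges: $x^n(0)\to x$.

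Finally, substituting \cref{eq:little-to-show} for each $x_k(0)$ yields
\[
x=\sum_{k=1}^\infty \frac{\phi_k(x;0)}{\phi_k(q_k(0);0)}\,q_k(0).
\]
For uniqueness of the coefficients, suppose $x=\sum_k c_k q_k(0)$ converges. Applying the continuous functional $\phi_j(\cdot;0)$ annihilates every $q_k(0)$ with $k\ne j$, since $q_k(0)\in\ell_k(0)$, and so forces $c_j=\phi_j(x;0)/\phi_j(q_j(0);0)$. The denominator is nonzero because $q_j(0)\ne 0$ lies on $\ell_j(0)$ but off $H_j$, by \cref{lemma:0}. This gives the Schauder-basis property on $\mathcal P$.
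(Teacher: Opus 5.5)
Your proposal is correct and follows essentially the same route as the paper: compactness of the partial sums (\cref{ass:partial-sums-compact}), together with the fact that every cluster point $z$ satisfies $\phi_k(z;0)=\phi_k(x;0)$ for all $k$ and hence lies in $\bigcap_k H_k(x;0)=\{x\}$, forces $x^n(0)\to x$. Your version is in fact somewhat more careful than the paper's: you argue with subnets rather than applying the sequence-only \cref{lemma:technical-convergence-lemma} to a mere accumulation point, you invoke \cref{ass:partial-sums-compact} rather than \cref{ass:compact} (the partial sums need not lie in $\mathcal P$), and you explicitly verify uniqueness of the coefficients.
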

\begin{proof}
Recall $x^n(0)=\sum_{k=1}^n x_k(0)$.  %Suppose $x^n(0) \nrightarrow x$ as $n \rightarrow \infty$. Then, there is a subsequence $\{x^{n_\ell}(0)\}\notin U$ for some open set $U$ containing $x$. By compactness in \cref{ass:partial-sums-compact}, there is a subsequence $\{x^{n_{\ell_j}}(0)\}$ of $\{x^{n_\ell}(0)\}$ with $x^{n_{\ell_j}}(0)\rightarrow \bar{x}$   where $\bar{x} \notin U$ since the complement of $U$ is closed and hence $\bar{x}\ne x$. 
%Set $\bar{x}^i=x^{n_{i_j}+1}(0)$ 
%for $n_{i_j}+1 \le i \le n_{\ell_{j+1}}
%$ for $i=1,2,\ldots $. 
%Note that ${\bar{x}}^i \rightarrow \bar x$ as $i\rightarrow \infty$ and ${\bar{x}}^i \in \cap_{k=1}^i H_k(x;0) $ for all $i$. Since $\cap_{k=1}^\infty H_k(\bar{x};0)=\{\bar{x}\}$, we have by \cref{lemma:technical-convergence-lemma} that ${\bar{x}}^i\rightarrow x$ as $i\rightarrow \infty$. Contradiction since $x\ne \bar{x}$. Hence $x^n(0) \rightarrow x$ as $n\rightarrow \infty$. 
%
Suppose the sequence $(x^n(0))$ does not converge to $x$. Then there exists an open neighborhood $U$ of $x$ such that there is a subsequence $(x^{n_\ell}(0))$ entirely outside $U$. By the compactness condition in \cref{ass:compact}, this sequence must have an accumulation point $\overline{x}$. Because the complement of $U$ is closed, $\overline{x} \notin U$, and therefore $\overline{x} \ne x$. However, because $x^{n_\ell}(0) \in \bigcap_{k=1}^{n_l} H_k(x;0)$, and the hyperplanes are closed, it follows from \cref{lemma:technical-convergence-lemma} that any accumulation point of these partial sums must be $x$. This contradicts $\overline{x} \ne x$. Hence, $x^n(0) \to x$ as $n \to \infty$.
\end{proof}

\begin{theorem}[Schauder basis formed at a given extreme point]\label{theorem:schauder-basis}
Suppose
\cref{ass:compact,ass:support-conditions,ass:constraints-bounded,ass:countably-many-constraints,ass:partial-sums-compact} hold. Let $x\in \mathcal P$ and $p \in E$. Then  
\[x=\sum_{k=1}^\infty \frac {\phi_k(x-p;p)} {\phi_k(q_k(p)-p;p)} (q_k(p)-p)+p\]
where the coefficients $\frac {\phi_k(x-p;p)} {\phi_k(q_k(p)-p;p)}$ on the basis vectors $q_k(p) - p$ are nonnegative.
\end{theorem}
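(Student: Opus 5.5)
The plan is to reduce to \cref{prop:schauder-basis-theta} by translation. Fix $p\in E$ and consider the translated set $\mathcal P - p = \bigcap_k \{y \in X \mid \phi_k(y) \le b_k - \phi_k(p)\}$. This is again an intersection of closed halfspaces with the same functionals $\phi_k$ and shifted right-hand sides. Every geometric object of \cref{s:geometric-properties} transforms by the same shift.
\begin{itemize}
\item Extreme points of $\mathcal P-p$ are $E-p$, and $0$ is one of them.
\item Active index sets are unchanged: $A(p)$ for $\mathcal P$ is the active set at $0$ for $\mathcal P-p$.
\item Slacks are unchanged, since $s_k(x) = (b_k-\phi_k(p)) - \phi_k(x-p)$.
\item The lines $\ell_k$, rays $r_k$ and adjacent points transform as $\ell_k(0)=\ell_k(p)-p$ and $q_k(0)=q_k(p)-p$. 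The latter holds because the minimization defining $\lambda^*$ in \cref{theorem:adjacent-extreme-points} is translation invariant.
\end{itemize}
Because of this, \cref{ass:compact,ass:support-conditions,ass:constraints-bounded,ass:countably-many-constraints,ass:partial-sums-compact} all transfer verbatim to $\mathcal P - p$. Compactness is preserved since translation is a homeomorphism in a topological vector space. The quantity $\rho$ is the same. The sets $\mathcal H(p)-p$ and the partial sums correspond exactly, and \cref{ass:partial-sums-compact} is in any case phrased for every $p$.

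Next I apply \cref{prop:schauder-basis-theta} to $\mathcal P - p$ at the extreme point $0$ with the point $x - p \in \mathcal P - p$. This gives
\[
x-p=\sum_{k=1}^\infty \frac{\phi_k(x-p;0)}{\phi_k(q_k(0);0)}\, q_k(0).
\]
Here $\phi_k(\cdot;0)$ denotes the $k$th active functional at $0$ for the translated set. It coincides with $\phi_k(\cdot;p)$, the $k$th active functional at $p$ in the relabeling of \cref{rem:labeling-workaround}, because the functionals themselves are not translated. Substituting $q_k(0)=q_k(p)-p$ and adding $p$ to both sides yields the displayed expansion. Addition of $p$ is continuous, so the series still converges to $x$.

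For nonnegativity, consider the $k$th active constraint at $p$, for which $\phi_k(p)=b_k$. Since $x \in \mathcal P \subseteq S_k$, we get $\phi_k(x-p;p)=\phi_k(x)-b_k \le 0$. The point $q_k(p)$ lies on the ray $r_k(p)\subseteq \ell_k(p)\cap S_k$, so $\phi_k(q_k(p)-p;p)\le 0$ as well.

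The main obstacle is showing this denominator is strictly negative. Suppose instead that $\phi_k(q_k(p))=b_k$. Then $q_k(p)\in \ell_k(p)\cap H_k=\mathcal H(p)=\{p\}$ by \cref{thm:characterize-EP}. But \cref{theorem:adjacent-extreme-points} gives $q_k(p)\neq p$, a contradiction. Hence each coefficient is a nonpositive number divided by a negative one, and is therefore nonnegative.

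The remaining care is bookkeeping. One must check that the relabeling of active constraints and the definition of $x_k(\cdot)$ via $H_k(x;\cdot)$ are the same before and after translation. This holds because $x_k(0)$ for $\mathcal P - p$ equals $x_k(p) - p$, by the explicit formula \cref{eq:little-to-show}.
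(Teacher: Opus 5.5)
Your proposal is correct and follows the paper's route: the paper's entire proof is the one-line remark that translating by $p$ reduces the claim to \cref{prop:schauder-basis-theta}, and you carry out that reduction with the bookkeeping (transfer of the assumptions, $q_k(0)=q_k(p)-p$, unchanged active labels) made explicit. Your sign argument for the coefficients goes beyond the paper's proof here. The numerator satisfies $\phi_k(x-p;p)\le 0$ because $x\in S_k$, and the denominator is strictly negative because $\phi_k(q_k(p))=b_k$ would force $q_k(p)\in\mathcal H(p)=\{p\}$; the paper leaves this implicit and only asserts it later, in the proof of \cref{theorem:convergence-in-value}.
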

\begin{proof}
Since topological vector spaces are invariant under translations, we can replace $x$ by $x-p$ and substitute in \cref{prop:schauder-basis-theta} above. 
\end{proof}

\section{A geometric simplex algorithm}\label{s:geometric-simplex-method}

Consider the following linear program

\begin{align}\label{eq:linear-program}
\begin{split}
\min \ \  & c(x) \\
\text{s.t.  } & a_k(x) \le b_k, \text{ for } k = 1,2,\dots
\end{split}
\end{align}
where $c:X\rightarrow \R$ is a continuous linear functional. We assume that the feasible region $\mathcal P = \{x : a_k(x) \le b_k, \text{ for } k = 1,2,\dots\}$ satisfies our assumptions up until this point in the paper. Note that an optimal solution $x^*$ is attained since $c$ is continuous and $\mathcal P$ is compact by \cref{ass:compact}. %Suppose without loss of generality that $c(x^*) \ge 0$ (if not, set $c'(x) = -c(x), x\in \mathcal P$  and consider $\max c' (x), x \in \mathcal P$). \chris{Not sure where we need this last part.}

What follows is a simplex algorithm for solving for $x^*$.  It successively moves from an extreme point along a steepest descent edge to its adjacent extreme point. Let's formalize the notion of steepest descent edge. Consider the lines $\ell_k(p), k=1,2,\ldots$ with $p \in E$ as one-dimensional vector spaces of $X$. 
 Let $\| \cdot \|_{\ell_k(p)}$ be a norm for each vector space $\ell_k(p)$. Note that the topology on each space $\ell_k(p)$ is homeomorphic to that of the real line and not necessarily in agreement with the relative topology of $X$ when restricted to $\ell_k(p)$.

 We have that $(c(q_k(p))-c(p))/ \| q_k(p)-p \|_{\ell_k(p)}$ is the rate of cost decrease (or increase)  when moving along edge $k$ out of $p$. Note that $\|q_k(p)-p\|_{\ell_k(p)}>0$ since $q_k(p)\ne p$ for all $p\in E$. The {\em steepest descent edge} out of $p$ would be an edge $k$ that minimizes  $(c(q_k(p)) -c(p))/\| q_k(p)-p\|_{\ell_k(p)}$. We next assume that this minimum exists.

\begin{assumption}[Steepest descent edge exists]\label{ass:steepest-exists}
For each extreme point $p$ the following infimum is attained:
\begin{align*}
\gamma(p) := \min_{k=1,2,\dots} \frac{c(q_k(p))-c(p)}{\| q_k(p)-p\|_{\ell_k(p)}}.
\end{align*}
We call a resulting edge a \emph{steepest descent edge} emanating from $p$. 
\end{assumption}

\begin{remark}\label{rem:closed-suffices}
A sufficient condition for \cref{ass:steepest-exists} is that for every extreme point $p$, the set of adjacent extreme points to $p$ (and including $p$) are a closed set, since then by Weierstrass extreme value theorem, the minimization problem defining $\gamma(p)$ has an optimal solution.
\end{remark}

\begin{example}[Hilbert cube, continued]\label{ex:hilbert-cube-steepest-exists}
The extreme points of $\mathscr H$ are the $\{0,1\}$–valued
sequences (each belongs to $X$ because $\sum_j \delta_j^2<\infty$). Fix an extreme point $p\in \mathcal P$ and, for each $k\in\mathbb{N}$, let $q_k(p)$ be the adjacent extreme point obtained by
flipping the $k$th coordinate of $p$ from $0$ to $1$ or $1$ to $0$. Then $\|q_k(p)-p\|_X=\delta_k\ \rightarrow\ 0$ as $k \to \infty$. Hence $q_k(p)\to p$ in $\mathscr H$. This implies the set $E(p)$ is closed for all $p \in E$. Thus, via \cref{rem:closed-suffices}, \cref{ass:steepest-exists} is satisfied. $\blacktriangleleft$
\end{example}

It is important for the geometric simplex algorithm to select edges of steepest descent in order to reach an optimal solution (or converge in value to optimality) as we show in \cref{theorem:convergence-in-value}. The issue is illustrated in the binary tree example of \cite[p. 869-7]{ghate2010shadow}. There, the authors illustrate that if a simplex method takes a non-steepest descent choice initially, it can thereafter make steepest descent choices and not converge in value to optimality. This is because it can get ``stuck'' on an infinitely improving path that converges in value to a suboptimal level. 

We are now ready to state the ``geometric'' simplex algorithm, the core mechanism studied in this paper:

\bigskip{\bf Simplex Algorithm}
\begin{enumerate}
\item Choose an initial extreme point $p\in \mathcal P$ and set $n=0$ and $p^0=p$.
\item {\bf while} $\gamma(p^n)<0$ {\bf do} 
\begin{quote}
Find  the adjacent extreme point  $p^{n+1}$ along a steepest descent edge from $p^n$, that is,
 \[\mbox{Set } p^{n+1}=\arg \min_{k=1,2,\ldots} \frac{c(q_k(p^n)) - c(p^n)}{\| q_k(p^n)-p^n\|_{\ell_k(p^n)}}. \] 
 Set $n=n+1$.
\end{quote}
\item  {\bf end while}
\end{enumerate}
\begin{remark}
Note that the Simplex Algorithm moves or pivots from $p^n$ to $p^{n+1}$ by removing exactly one of the hyperplanes passing through $p^n$ and adding exactly one of the hyperplanes not passing through $p^n$ during each iteration $n$.
\end{remark}

We turn now to establishing that there is a strictly positive minimum cost reduction in each iteration of the simplex algorithm. This requires the following assumption.

\begin{assumption}[Edge lengths bounded away from zero and infinity]\label{ass:edge-lengths-bounded-away-from-zero}
$\nu = \inf_{p \in E, k \in A(p)} \|e_k(p)\|_{\ell_k(p)} > 0 $ and $D \triangleq\sup_{p \in E, k \in A(p)} \|e_k(p)\|_{\ell_k(p)} < \infty$.
\end{assumption}

\cref{ass:edge-lengths-bounded-away-from-zero} ensures that each edge direction 
has a minimal geometric “thickness,” preventing degenerate cases where the distance between adjacent extreme points tends to zero. Without this, the simplex path could accumulate infinitely many pivot steps in a bounded region without improving the objective sufficiently to guarantee convergence in value.

\begin{example}[Hilbert cube, continued]\label{ex:hilbert-cube-edge-lengths-bounded-away-from-zero}
Following \cref{ex:hilbert-cube-steepest-exists}, we know every extreme point $p$ has the form $p_i = 0$ or $1$ for all $i$, and the set of adjacent extreme points to $p$ has elements $q_k(p)$ where the $k$th coordinate of $p$ is ``flipped'' from $0$ to $1$ or $1$ to $0$. The edge $e_k(p)$ is thus the unit vector with a $1$ is the $k$th position and $0$ otherwise, and $\ell_k(p)$ is the $k$th ``coordinate'' axis. Thus, $||\cdot||_{\ell_k(p)}$ is the usual topology on the real line. Thus, $\|e_k(p)\|_{\ell_k(p)} = 1$ for all $k$ and all $p$ and so $\nu = 1$.  $\blacktriangleleft$
\end{example}

\begin{lemma}\label{lemma:termination-condition}
If \cref{ass:compact,ass:support-conditions,ass:constraints-bounded,ass:countably-many-constraints,ass:partial-sums-compact,ass:steepest-exists,ass:edge-lengths-bounded-away-from-zero} hold then either the Simplex Algorithm terminates at some iteration $n$ or $\lim_{n\rightarrow\infty}\gamma(p^n)=0$.    
\end{lemma}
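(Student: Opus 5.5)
The plan is a telescoping argument on the objective values $c(p^n)$, combined with the lower bound $\nu$ on edge lengths from \cref{ass:edge-lengths-bounded-away-from-zero}. Suppose the Simplex Algorithm does not terminate, so $\gamma(p^n)<0$ for every $n$. By \cref{ass:steepest-exists}, $p^{n+1}=q_{k_n}(p^n)$ for some steepest descent index $k_n$. By the definition of $\gamma(p^n)$,
\[
c(p^{n+1})-c(p^n)=\gamma(p^n)\,\|q_{k_n}(p^n)-p^n\|_{\ell_{k_n}(p^n)} .
\]
Since $\gamma(p^n)<0$ and $\|e_{k_n}(p^n)\|_{\ell_{k_n}(p^n)}\ge \nu>0$, this gives
\[
c(p^{n+1})-c(p^n)\le \nu\,\gamma(p^n)<0 .
\]
Consequently the sequence $c(p^n)$ is strictly decreasing.

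Next, bound the values from below. By \cref{ass:compact}, $\mathcal P$ is compact and $c$ is continuous, so $c^*=\min_{x\in\mathcal P}c(x)$ exists and $c(p^n)\ge c^*$ for all $n$. Summing the per-step inequality from $0$ to $N-1$ telescopes to
\[
\nu\sum_{n=0}^{N-1}(-\gamma(p^n))\le c(p^0)-c(p^N)\le c(p^0)-c^*<\infty .
\]
So $\sum_n(-\gamma(p^n))$ is a convergent series of nonnegative terms. Its terms must therefore tend to zero, giving $\gamma(p^n)\to 0$.

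The step needing the most care is the bookkeeping that the iterate really sits at distance at least $\nu$ along the chosen edge. We need $p^{n+1}$ to be the adjacent extreme point $q_{k_n}(p^n)$ given by \cref{theorem:adjacent-extreme-points}, and not some intermediate point. We also need the ratio defining $\gamma$ to use the same norm $\|\cdot\|_{\ell_k(p)}$ in which $\nu$ is measured. Both hold by the definitions, since $e_k(p)=q_k(p)-p$.

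Only the lower bound $\nu$ is needed here; the upper bound $D$ plays no role. The remaining assumptions (\cref{ass:support-conditions,ass:constraints-bounded,ass:countably-many-constraints,ass:partial-sums-compact}) enter only to make the iterates well defined extreme points, with edges indexed by $A(p)$.
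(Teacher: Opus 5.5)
Your proof is correct and follows essentially the paper's argument. Both rest on the per-step decrease $c(p^{n+1})-c(p^n)\le \nu\,\gamma(p^n)$ and the lower bound $c(x^*)$; the paper argues by contradiction along a subsequence with $\gamma(p^{n_k})<-\epsilon$, while your telescoping sum reaches the conclusion directly and gives the slightly stronger fact $\sum_n|\gamma(p^n)|<\infty$.
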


For the remainder of the argument, we need another uniformity property to complement \cref{ass:constraints-bounded}.

\begin{assumption}[Constraint functions uniformly bounded over extreme points] $\xi \triangleq \sup_{p \in E, \alpha \in A} |\phi_\alpha(p)| < \infty$.\label{ass:bounded-at-extremes}
\end{assumption}

Note that by Bauer's Maximum Principle \cite[Theorem~7.69]{hitchhiker}, this implies that the supremum over the whole set $\mathcal P$ is also bounded by $\xi$, since an optimal solution exists among the extreme points of $\mathcal P$ when optimizing over any continuous linear functional. 
\cref{ass:bounded-at-extremes} also prevents pathological scaling of the constraint system that could destroy the uniformity of convergence arguments later in \cref{theorem:convergence-in-value}.

\begin{example}[Hilbert cube, continued]\label{ex:hilbert-cube-bounded-at-extremes}
Recall from \cref{ex:hilbert-cube} that $\phi_k(x) = \langle x, y^k \rangle$ where $y^k = \pm 1$ for all $k\in A$. As we have also observed, every extreme $p$ has the form $p_j = 0$ or $1$ for all $j$. Thus, we can see that $\phi_k(p) \le 1$ for all $p \in E$. In particular $\xi = 1 < \infty$ and so \cref{ass:bounded-at-extremes} is satisfied. $\blacktriangleleft$
\end{example}

In order to assure convergence in value of the Simplex Algorithm, we also need an assumption on the objective function:

\begin{assumption}[Uniform convergence of edge costs]\label{ass:uniform-convergence-edge-costs}
Let $c(q_k(p) -p)$ be the cost of edge $e_k(p)$ for $p\in E$ and $k=1,2,\dots$. We assume $\sum_{k=1}^\infty |c(q_k(p) -p)|$ converges uniformly over $p\in E$.
\end{assumption}

\begin{example}[Hilbert cube, continued]\label{ex:hilbert-cube-ass:uniform-convergence-edge-costs} Let $c:X\to\mathbb{R}$ be a continuous linear functional, represented by some $h=(h_j)\in X$
via the Riesz map: $c(x)=\langle h,x\rangle_X=\sum_{j\ge1}\delta_j^2 h_j x_j$.
For an extreme point $p$ of $H$ and its adjacent extreme point $q_k(p)$ (obtained by flipping the $k$th
coordinate), the edge is $q_k(p)-p=\pm e_k$, so $c(q_k(p) -p)=\langle h,e_k\rangle_X= \pm\delta_k^2 h_k$ independent of $p$ up to sign. Therefore $\sum_{k=1}^\infty |c(q_k(p) -p)|=\sum_{k=1}^\infty \delta_k^2\,|h_k|$
and this series converges \emph{uniformly in $p\in E$} (indeed, the right-hand side does not depend on $p$) whenever $\sum_{k\ge1}\delta_k^2 |h_k|<\infty$. Hence, under the mild additional summability condition $\sum_{k}\delta_k^2 |h_k|<\infty$ on the Riesz representative $h$ of $c$, \cref{ass:uniform-convergence-edge-costs} holds.
\end{example}

In finite dimensions, the simplex method can optimize \emph{any} linear functional over a polyhedron. For a simplex method to work in infinite spaces, we need both properties on $\mathcal P$ and $c$, since a linear functional is not nearly as well-behaved in such spaces as in finite-dimensional space. \cref{ass:uniform-convergence-edge-costs} aims to offer a minimal restriction on $c$ with respect to the set of extreme points $E$.

\begin{remark}\label{rem:strongest-cost-convergence}
Note that $| c(q_k(p) - p) | \le C \alpha^k$ for all $p\in E$ and $k=1,2,\ldots$ for some $0< \alpha<1$ and  constant $C>0$ is a sufficient condition for \cref{ass:uniform-convergence-edge-costs} to hold.
\end{remark}
 
\begin{theorem}[Convergence in value]\label{theorem:convergence-in-value}
If \cref{ass:compact,ass:support-conditions,ass:constraints-bounded,ass:countably-many-constraints,ass:partial-sums-compact,ass:steepest-exists,ass:edge-lengths-bounded-away-from-zero,ass:uniform-convergence-edge-costs,ass:bounded-at-extremes} hold then either the Simplex Algorithm terminates with an optimal solution or the infinite sequence of extreme points generated converges in value to an optimal solution; i.e. 
$\lim_{n \rightarrow \infty} c(p^n) = c^*$,
where $c^*=c(x^*)$ and $x^*$ is an optimal solution of \cref{eq:linear-program}.
\end{theorem}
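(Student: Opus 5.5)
We prove convergence in value by combining \cref{lemma:termination-condition} with the Schauder expansion of \cref{theorem:schauder-basis}. The idea is to write $x^*-p^n$ as a nonnegative combination of edge directions at $p^n$. Then bound $c(x^*)-c(p^n)$ from below by $\gamma(p^n)$ times a uniformly bounded quantity. Since $\gamma(p^n)\to 0$, this forces $c(p^n)\to c^*$.

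First consider the case where the algorithm terminates at some $p^n$ with $\gamma(p^n)\ge 0$. Apply \cref{theorem:schauder-basis} with $p=p^n$ and $x=x^*$:
\[ x^*-p^n=\sum_{k=1}^\infty \theta_k (q_k(p^n)-p^n),\qquad \theta_k=\frac{\phi_k(x^*-p^n;p^n)}{\phi_k(q_k(p^n)-p^n;p^n)}\ge 0. \]
Because $c$ is continuous and linear and the series converges in $\tau$,
\[ c(x^*)-c(p^n)=\sum_k \theta_k\, c(q_k(p^n)-p^n). \]
Each term satisfies $c(q_k(p^n)-p^n)\ge \gamma(p^n)\,\|e_k(p^n)\|_{\ell_k(p^n)}\ge 0$, so $c(x^*)\ge c(p^n)$. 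Thus $p^n$ is optimal.

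In the infinite case, the same identity gives
\[ c(x^*)-c(p^n)\ge \gamma(p^n)\sum_k \theta_k\,\|e_k(p^n)\|_{\ell_k(p^n)}\ge \gamma(p^n)\, D\sum_k\theta_k, \]
where $\gamma(p^n)<0$ and $D$ comes from \cref{ass:edge-lengths-bounded-away-from-zero}. If we show $\sup_n\sum_k\theta_k^{(n)}\le M<\infty$, then $0\ge c^*-c(p^n)\ge \gamma(p^n)DM\to 0$ by \cref{lemma:termination-condition}, and we are done. Note that $c(p^n)$ is also monotone decreasing, so the limit exists.

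The main obstacle is controlling the coefficients $\theta_k$ uniformly in $n$ and $k$. This splits into two bounds.

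\begin{itemize}
\item \textbf{Numerator.} It is $\phi_k(x^*;p^n)-\phi_k(p^n;p^n)$, which is at most $2\xi$ in absolute value by \cref{ass:bounded-at-extremes} and the Bauer principle remark.
\item \textbf{Denominator.} Since $q_k(p^n)$ is the first point on the edge where a previously slack constraint becomes tight, $|\phi_k(q_k(p^n)-p^n;p^n)|$ is the distance moved off the $k$th active hyperplane. I would try to bound it below using $\rho$ from \cref{ass:support-conditions}(b) applied at $q_k(p^n)$, where constraint $k$ is now slack. This gives $|\phi_k(q_k(p^n)-p^n;p^n)|=s_k(q_k(p^n))\ge\rho$.
\end{itemize}

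Together these give $\theta_k\le 2\xi/\rho$ uniformly. A uniform bound on each $\theta_k$ does not by itself bound $\sum_k\theta_k$. To handle the summability, I would avoid summing $\theta_k$ directly. Instead, split the series at an index $K$:
\[ c(x^*)-c(p^n)\ge \gamma(p^n)\,D\,K\,\frac{2\xi}{\rho}-\frac{2\xi}{\rho}\sum_{k>K}|c(q_k(p^n)-p^n)|. \]
The tail is below any $\varepsilon$ uniformly in $n$ for $K$ large, by \cref{ass:uniform-convergence-edge-costs}. Fixing $K$ and letting $n\to\infty$ kills the first term. Hence $\liminf_n (c^*-c(p^n))\ge -\varepsilon'$ for every $\varepsilon'>0$. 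Combined with $c(p^n)\ge c^*$, this yields $c(p^n)\to c^*$.
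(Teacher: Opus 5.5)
Your proposal is correct and follows essentially the same route as the paper. Both expand $x^*-p^n$ in the edge Schauder basis at $p^n$, bound each coefficient by $2\xi/\rho$, control the tail uniformly via \cref{ass:uniform-convergence-edge-costs}, and control the first $K$ terms through $\gamma(p^n)\to 0$ from \cref{lemma:termination-condition}. Your closing step, which fixes $K$ and applies the uniform bound $2\xi/\rho$ before letting $n\to\infty$, is cleaner than the paper's, which chooses $\epsilon'$ in terms of the $n$-dependent quantities $\bar\phi,\underline\phi$.
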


\begin{proof} Suppose the Simplex Algorithm terminates at iteration $n$. Then $ \gamma(p^n)=\min_{k=1,2,\dots} (c(q_k(p^n))-c(p^n)) /\| q_k(p^n)-p^n\|\ge 0$
so that  
$c(q_k(p^n))-c(p^n)\ge 0$ for all $k=1,2,\ldots.$
From \cref{theorem:schauder-basis}, for any $x\in \mathcal P, x\ne p^n$, we have
\[x=\sum_{k=1}^\infty\frac {\phi_k(x-p^n;p^n)} {\phi_k(q_k(p^n)-p^n;p^n)} (q_k(p^n)-p^n)+p^n\]
and since $c$ is a continuous function,
\begin{equation}\label{eq:cost-breakdown}
c(x)-c(p^n)=\sum_{k=1}^\infty\frac {\phi_k(x-p^n;p^n)} {\phi_k(q_k(p^n)-p^n;p^n)} (c(q_k(p^n))-c(p^n)).
\end{equation}
But $\phi_k(x-p^n;p^n) \le 0$ and $\phi_k(q_k(p^n)-p^n;p^n) < 0$ since $x$ and $q_k(p^n)$  are in $\mathcal P$ and $q_k(p^n)$ is not equal to $p^n$.
Hence $c(x)-c(p^n)\ge 0$ so that $c(p^n)\le c(x)$ and therefore $p^n$ is optimal.

\medskip
Now, suppose the Simplex Algorithm does not terminate.  Consider $x^*\in \mathcal P$. We have
\begin{equation*}
c(x^*)-c(p^n)=\sum_{k=1}^\infty\frac {\phi_k(x^*-p^n;p^n)} {\phi_k(q_k(p^n)-p^n;p^n)} (c(q_k(p^n)-c(p^n)).    
\end{equation*}
Let $\epsilon>0$. We have by \cref{ass:bounded-at-extremes}
\[
|\phi_k(x^*-p^n;p^n)| \le  2\xi < \infty
\]
using the triangle equality. 

Moreover, $\phi_k(q_k(p^n) - p^n;p^n) = \phi_k(q_k(p^n);p^n) - \phi_k(p^n;p^n) = 
\phi_k(q_k(p^n);p^n) - b_k(p^n) = -s_k(q_k(p^n);p^n).$
Thus  $s_k(q_k(p^n)) = -\phi_k(q_k(p^n) - p^n) >0.$ But $s_k(q_k(p^n)) \ge \inf_{p\in E, \alpha\in A\setminus A(p)} s_k(p) = \rho > 0$ by \cref{ass:support-conditions}. Hence $|\phi_k(q_k(p^n)-p^n;p^n)\ge \rho >0$. By \cref{ass:uniform-convergence-edge-costs}, for all $\epsilon' >0$ there is a $K' (\epsilon') $ such that $\sum_{k=1}^\infty |c(e_k(p))| < \epsilon$ for all $p\in E$.

Then 
\begin{align*}
&\sum_{k = K' (\epsilon')}^\infty \frac {\phi_k(x^*-p^n;p^n)} {\phi_k(q_k(p^n)-p^n;p^n)} (c(q_k(p^n)) - c(p^n)) \\
\le & \sum_{k = K' (\epsilon')}^\infty \frac {|\phi_k(x^*-p^n;p^n)|} {|\phi_k(q_k(p^n)-p^n;p^n)|} |c(q_k(p^n)) - c(p^n))| \\
\le & \frac {\xi D} {\rho}  \sum_{k = K' (\epsilon')}^\infty |c(q_k(p^n)) - c(p^n))| \\
= & \frac {2\xi D} {\rho} \sum_{k = K' (\epsilon')}^\infty |c(e_k(p^n))| \\
< & \frac {2\xi D} {\rho} \epsilon'.
\end{align*}
Set $\epsilon' = \rho
\epsilon/2\xi D$,  $K(\epsilon) = K' (\epsilon')$ and we have
\[
\sum_{k = K (\epsilon)}^\infty \frac {\phi_k(x^*-p^n;p^n)} {\phi_k(q_k(p^n)-p^n;p^n)} c(q_k(p^n) - c(p^n)) < \epsilon
\]
for all $\epsilon >  0$ and $p^n, n=1,2,\ldots$. That is, 
\[
\sum_{k = 1}^\infty \frac {\phi_k(x^*-p^n;p^n)} {\phi_k(q_k(p^n)-p^n;p^n)} (c(q_k(p^n))- c(p^n))
\]
converges uniformly over $p^n, n=1,2,\ldots.$

Then for any $\epsilon>0$, there is a $K(\epsilon)$ independent of $p^n$ such that 
\[ -\epsilon < c(x^*)-c(p^n) - \sum_{k=1}^{K(\epsilon)} \frac {\phi_k(x^*-p^n;p^n)} {\phi_k(q_k(p^n)-p^n;p^n)} (c(q_k(p^n)-c(p^n)) <\epsilon\]
so
\begin{equation}\label{eq:first}
 c(x^*)-c(p^n) > \sum_{k=1}^{K(\epsilon)} \frac {\phi_k(x^*-p^n;p^n)} {\phi_k(q_k(p^n)-p^n;p^n)} (c(q_k(p^n)-c(p^n)) - \epsilon
\end{equation}
for all $p^n, n=1,2,\ldots$.

We have by \cref{lemma:termination-condition}, $\lim_{n\rightarrow\infty} \gamma(p^n) = 0$ while $\gamma(p^n) < 0$ for all $n$. Hence  for all $\epsilon' > 0$, there is a $N(\epsilon') > 0$  such that $0 >\gamma (p^n) > -\epsilon'$ for all $n> N(\epsilon')$
Hence for 
$n>N(\epsilon')$, 
\[0> \min_{k=1,2,\ldots}(c(q_k(p^n))-c(p^n))/ \|q_k(p^n)-p^n \|_{\ell_k(p^n)} >- \epsilon'\]
so that 
\[0> (c(q_k(p^n))-c(p^n))/ \|q_k(p^n)-p^n \|_{\ell_k(p^n)} > - \epsilon'\]
or
\begin{equation}\label{eq:second}
c(q_k(p^n))-c(p^n)>  - \|q_k(p^n)-p^n \|_{\ell_k(p^n)}  \epsilon' > - D\epsilon'
\end{equation}
for all $k=1,2,\ldots$ and $n > N(\epsilon')$.

From \cref{eq:first,eq:second}, 
\begin{equation}\label{eq:third}
c(x^*)-c(p^n) > \sum_{k=1}^{K(\epsilon)} \frac {\phi_k(x^*-p^n;p^n)} {\phi_k(q_k(p^n)-p^n;p^n)} (- D \epsilon') - \epsilon
\end{equation}
for all $n > N(\epsilon')$
since $\phi_k(x^*-p^n;p^n) \le 0$ and $\phi_k(q_k(p^n)-p^n;p^n) < 0$ because $x^*\in \mathcal P$ and $q_n(p^n)\in \mathcal P$ and $q_n(p^n)\ne p^n$.

Hence from \cref{eq:third}, 
\[c(x^*)-c(p^n) > -D \epsilon' \sum_{k=1}^{K(\epsilon)} \frac {\phi_k(x^*-p^n;p^n)} {\phi_k(q_k(p^n)-p^n;p^n)}   - \epsilon\]
for all $n > N(\epsilon')$.

Let ${\bar \phi}(\epsilon,n,x^*) = \max_{k=1,2,\ldots, K(\epsilon)} \phi_k(x^*-p^n;p^n)$ and ${\underline \phi}(\epsilon,n,x^*) = \min_{k=1,2,\ldots, K(\epsilon)}\phi_k(q_k(p^n)-p^n;p^n)$.

Then
\[ c(x^*)-c(p^n) > -D \epsilon' K(\epsilon) \frac {{\bar \phi}(\epsilon,n,x^*) } {{\underline \phi}(\epsilon,n,x^*)}   - \epsilon \]
for all $n > N(\epsilon')$. Set $\epsilon' = (\epsilon {\bar \phi}(\epsilon,n,x^*)) / (2 D K(\epsilon) {\underline \phi}(\epsilon,n,x^*))$. Then $c(x^*) - c(p^n) > - \epsilon/2 -\epsilon/2 = - \epsilon$ for all $n > N(\epsilon')$. Since $\epsilon > 0$ was arbitrary, $\lim_{n\rightarrow\infty} c(p^n) = c(x^*)$.
\end{proof}

Note that since $\mathcal P$ is compact and the sequence $(p^n)_{ n=1,2,\ldots}$ is contained in $\mathcal P$, the sequence must have an accumulation point $p \in P$.
%is a subsequence $p^{n_\ell}, \ell=1,2,\ldots$ and a $p\in E$ with $p^{n_\ell} \rightarrow p$ as $\ell \rightarrow \infty$. 
By \cref{theorem:convergence-in-value} and the continuity of $c$, we have $c^*= \lim = c(p^{n_\delta})$ where $n_\delta$ is a converging subnet, so that $p$ is an optimal solution. 
Below we show that when the optimal solution is unique, we obtain a sequence of pivots $p^n$  that converges to $x^*$.

\begin{corollary}\label{cor:convergence-in-solution}
Let \cref{ass:compact,ass:support-conditions,ass:constraints-bounded,ass:countably-many-constraints,ass:partial-sums-compact,ass:steepest-exists,ass:edge-lengths-bounded-away-from-zero,ass:uniform-convergence-edge-costs,ass:bounded-at-extremes} hold and suppose the optimal solution $x^*$ is unique. Either the Simplex Algorithm terminates with the optimal solution $x^*$ or the infinite sequence of extreme points generated converges to the optimal solution $x^*$, i.e. $\lim_{n \rightarrow \infty}p^n = x^*$.
\end{corollary}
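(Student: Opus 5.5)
The plan is to combine \cref{theorem:convergence-in-value} with the compactness in \cref{ass:compact} through a standard subnet argument. If the Simplex Algorithm terminates at some iteration $n$, the first part of the proof of \cref{theorem:convergence-in-value} already shows that $p^n$ is optimal. Since $x^*$ is the unique optimal solution, $p^n = x^*$, and this case is done. So assume the algorithm generates an infinite sequence $(p^n)$ of extreme points. By \cref{theorem:convergence-in-value}, $c(p^n)\to c^*=c(x^*)$.

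Suppose, for contradiction, that $p^n\not\to x^*$. Then there is an open neighborhood $U$ of $x^*$ and a subsequence $(p^{n_\ell})$ lying entirely in the closed set $\mathcal P\setminus U$. Because $X$ need not be metrizable, sequential compactness is not available. Instead I use \cref{ass:compact} directly: $\mathcal P\setminus U$ is a closed subset of the compact set $\mathcal P$, hence compact. Therefore the subsequence has a cluster point $\bar x\in\mathcal P\setminus U$, and there is a subnet $(p^{n_{\ell_\delta}})_\delta$ converging to $\bar x$. In particular $\bar x\neq x^*$.

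Now $c$ is continuous, so $c(p^{n_{\ell_\delta}})\to c(\bar x)$. On the other hand, $(c(p^{n_{\ell_\delta}}))_\delta$ is a subnet of the convergent real sequence $(c(p^n))$, so it also converges to $c^*$. Limits in $\R$ are unique, so $c(\bar x)=c^*$. Since $\bar x\in\mathcal P$ is feasible, $\bar x$ is an optimal solution. Uniqueness of the optimum then gives $\bar x=x^*$, which contradicts $\bar x\notin U$. Hence $p^n\to x^*$.

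The only delicate step is the passage to a convergent subnet rather than a subsequence, together with the check that a subnet of a convergent real sequence has the same limit. This requires taking the subnet's index map to be cofinal, which is the definition of a subnet, so the limit is inherited. Everything else follows directly from \cref{theorem:convergence-in-value}, continuity of $c$, and closedness of $\mathcal P\setminus U$.
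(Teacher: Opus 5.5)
Your proof is correct and takes essentially the same route as the paper. Both argue by contradiction with a neighborhood $U$ of $x^*$, use compactness of the closed set $\mathcal P\setminus U$ to get a cluster point and a convergent subnet, and then combine continuity of $c$, \cref{theorem:convergence-in-value}, and uniqueness of the optimum; your explicit handling of the terminating case and of why the subnet inherits the limit $c^*$ only fills in details the paper leaves implicit.
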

\begin{proof}
From \cref{theorem:convergence-in-value}, either the simplex algorithm terminates in an optimal solution or $\lim_{n \rightarrow \infty} c(p^n) = c^*$
where $c^*=c(x^*)$. Suppose $p^n \nrightarrow x^*$ as $n\rightarrow \infty$. Then there is an open neighborhood $U$ containing $x^*$  such that a subsequence $(p^{i_j})_{ j=1,2,\ldots}$ lies entirely outside of $U$. But $\mathcal P$ is compact, the closed subest $\mathcal P \setminus U$ is also compact. Therefore, the sequence $(p^{i_j})_{ j=1,2,\ldots}$ has an accumulation point $x' \in \mathcal P \setminus U$. By the continuity of $c$ and \cref{theorem:convergence-in-value}, evaluating $c$ along the subnet converging to $x'$ yields $c(x') = c^*$, making $x'$ an optimal solution. This contradicts the assumption that $x^*$ is uniquely optimal, since $x' \notin U$ ensures $x \neq x'$. 
%
%so there is a subsequence $p^{i_{j_k}}, j=1,2,\ldots$ 
%converging to a point $x' \in \mathcal P$, with $x' \ne x^*$. But $c(p^{i_{j_k}}\rightarrow c^*, j=1,2,\ldots$ and hence $x'$ is optimal. Contradiction since $x^*$ is uniquely optimal.
\end{proof}

\section{Defining polytopes}\label{s:defining-polytopes}

As mentioned in the introduction, there is a long history of attempting to define what we mean by a ``polytope'' in general topological space. As outlined, the usual notions from the finite-dimensional setting fail somewhat spectacularly. This gave rise to a long succession of attempts to define polytopes in infinite dimensions in a more workable manner \cite{bastiani1959cones,klee1956strict,maserick1965convex,alfsen1969vertices,phelps1969infinite,amir1972suns,gleit1972note,rajagopalan1974maximal,lau1974infinite,brosowski1974some,durier1986sets}. For example, Phelps \cite{phelps1969infinite} was interested in the Choquet ``simplex'' and attempted definitions of a polytope that captured this ``simplex''. However, a stunning set of results by \cite{armstrong1977polyhedrality} showed that nearly all of these attempted definitions excluded something quite simple: infinite-dimensional cubes. This is especially surprising from an optimization perspective, since a cube corresponds to ``box constraints'' in linear programming. One would hope that any definition of polytope captures such a foundational setting. We have shown in \cref{ex:hilbert-cube,ex:hilbert-cube-ass:uniform-convergence-edge-costs,ex:hilbert-cube-bounded-at-extremes,ex:hilbert-cube-partial-sums-compact,ex:hilbert-cube-steepest-exists,ex:hilbert-cube-edge-lengths-bounded-away-from-zero,ex:hilbert-cube-countably-many-constraints} that the Hilbert cube satisfies the assumptions of this paper's setting. We are thus the first paper that we are aware of to construct a simplex method that is amenable to the Hilbert cube. 

For concreteness, let us illustrate how a cube can present issues for an existing definition of a polytope in the literature. Consider the definition of Klee in \cite{klee1956strict}, which is arguably the easiest to state. Klee defines a polytope to be any closed convex set whose intersection with every finite-dimensional affine subspace (also called a ``flat'' in Klee's language) is a finite-dimensional polytope (in its usual definition) in that flat. The following example shows that the Hilbert cube $\mathscr H$ defined in \cref{ex:hilbert-cube} fails this definition in a spectacular fashion.

\begin{example}[Hilbert cube is not a Klee polytope]\label{ex:hilbert-cube-not-Klee}
We show that the Hilbert cube $\mathscr H$ defined in \cref{ex:hilbert-cube} is \emph{not} a polytope in Klee's sense by showing that a section of $H$ 
with a $2$-dimensional flat is not a finite-dimensional polytope in that flat. In fact, we can cut the Hilbert cube with a 2-dimensional flat and reveal a disc!

Let $p, u, v \in \R^\N$ where $u$ and $v$ are linearly independent (that is, there does 
not exist a $c \in \R$ such that $v = c u$). Then the set 
\begin{align*}%\label{eq:flat-in-the-space}
F_{uv}(p) = \left\{p + \alpha u + \beta v \mid \alpha, \beta \in \R\right\} 
\end{align*}
is a $2$-dimensional flat of $\R^\N$. The intersection $\mathscr H \cap F_{uv}(p)$ is an (at most $2$-dimensional) 
section of $\mathscr H$ given by
\begin{align*}%\label{eq:section-in-the-space}
\mathscr H \cap F_{uv}(p) = \left\{p + \alpha u + \beta v \mid 
0 \le p_i + \alpha u_i + \beta v_i \le 1, 
\text{ for all } \alpha, \beta \in \R \text{ and } i = 1,2,\dots \right\}. 
\end{align*}
Re-expressing the above set through its linearly isomorphic representation in the two-dimensional 
subspace $V$ of $\R^\N$ spanned by $\left\{u,v\right\}$ yields:
\begin{align}\label{eq:section-in-2d}
S_{uv}(p) = \left\{(\alpha,\beta) \in \R^2 \mid 
0 \le p_i + u_i\alpha  + v_i \beta \le 1, \text{ for all } i = 1,2,\dots \right\}. 
\end{align} 
The goal for the rest of the proof is to show that $u$, $v$ and $p$ can be chosen so 
that $S_{uv}(p)$ is the unit disk centered at the origin in $V$. Note that $S_{uv}(p)$ 
is a countable intersection of halfspaces in $\R^2$. 

The first insight is that the unit disk can also be expressed as the countable intersection 
of halfspaces. Let $C$ be the unit circle and $D$ be the unit disk in $\R^2$ and let 
\begin{align*}
E = \left\{(\alpha, \beta) \in \R^2 
: r \alpha + q \beta \le 1 \text{ for all } (r, q) \in C \cap \Q^2 \right\},
\end{align*} 
where $\mathcal Q$ is set of rational numbers. We claim that $D = E$. We first show that $D \subseteq E$. Let $T$ be any tangent line of 
$C$ in $\R^2$. By the convexity of $D$, it is clear that $D$ lies on entirely one side of 
$T$. Let $(r,q) \in C \cap \Q^2$ and note that the set 
$\left\{(\alpha, \beta) \mid r \alpha + q \beta = 1 \right\}$ is the tangent line to $C$ 
through $(r,q)$. It is clear that 
$D \subset \left\{(\alpha, \beta) \mid r \alpha + q \beta \le 1 \right\}$. Taking the intersection 
over all $(r,q) \in C \cap \Q^2$ yields $D \subseteq E$. 

Conversely, let $(a, b) \notin D$. Let $(\bar a, \bar b)$ be the point in $D$ with the smallest 
distance to $(a,b)$, where the distance derives from the Euclidean norm in $\R^2$. 
The tangent plane $\left\{(\alpha, \beta) \mid \bar a \alpha + \bar b \beta = 1 \right\}$ separates 
$D$ from $(a, b)$. In particular, we know $\bar a a+ \bar b b > 1$. Hence, there exist 
rationals $(\bar r,\bar q) \in C \cap Q^2$ sufficiently close to $(\bar a, \bar b)$ so that it 
remains that $\bar r a + \bar q b > 1$. Hence $(a,b) \notin E$. That is, $E \subseteq D$. 
This implies $E = D$.

Now, let $\bar u,\bar v$ be chosen such that $\bar u_i = r/2$ and $\bar v_i = q/2$ where $(r,q)$ 
span the countably many points $C \cap \Q^2$. Choose $\bar p$ such that $\bar p_i = 1/2$ for 
all $i \in \N$. We claim that $S_{\bar u\bar v}(\bar p) = E$. First we show 
$S_{\bar u\bar v}(\bar p) \subseteq E$. According to \cref{eq:section-in-2d} we have
\begin{align*}
S_{\bar u\bar v}(\bar p) = \left\{(\alpha,\beta) \in \R^2 
\mid -1 \le r \alpha + q \beta \le 1, \text{ for all } (r,q) \in C \cap \Q^2 \right\}
\end{align*}
after some rearranging. Hence it clearly follows that $S_{\bar u\bar v}(\bar p) \subseteq E$ 
since $S_{\bar u\bar v}(\bar p)$ includes all the inequalities defining $E$ 
(indeed all the $r \alpha + q \beta \le 1$ constraints). Conversely, we note that 
$E \subseteq S_{\bar u\bar v}(\bar p)$. This also follows immediately since if 
$r \alpha + q \beta \le 1$ is an inequality defining $E$ then so is $-r \alpha - q \beta \le 1$ 
since if $(r,q) \in C \cap \Q^2$ then $(-r, -q) \in C \cap \Q^2$. Rewriting the latter inequality 
yields $r \alpha  + q \beta \ge -1$. This argument for all $(r, q) \in C \cap Q^2$ recovers all 
of the constraints of $S_{\bar u\bar v}(\bar p)$. Hence, $E \subseteq S_{\bar u\bar v}(\bar p)$ 
and conclude $E = S_{\bar u\bar v}(\bar p)$.

Putting things together, our work shows $D = S_{\bar u\bar v}(\bar p)$. This implies that 
$\mathscr H \cap F_{\bar u\bar v}(\bar p)$ is linearly isomorphic to a unit disk. Clearly, the unit disk 
cannot be expressed as the intersection of finitely many halfspaces. Hence, $\mathscr H$ is not a Klee polyhedron.  $\blacktriangleleft$
\end{example}

\begin{remark}\label{rem:any-convex-set}
Note that if we replace the unit disk $D$ in the above example with \emph{any} finite-dimensional 
compact convex set, then a similar construction can be made and $p, u, v$ chosen so that $D$ is an 
affine section of $\mathscr H$. This follows from  \cite[Corollary 7.49]{hitchhiker} that every 
finite-dimensional compact convex set is the intersection of countably many closed 
halfspaces.   $\blacktriangleleft$
\end{remark}

We here put forward our definition of a polytope in infinite dimensional space---an object that can be traversed by the Simplex Algorithm defined in this paper. Such polytopes may hopefully enjoy the simplex method efficiency enjoyed by finite-dimensional linear programs. This offers a kind of ``optimizer's perspective'' on the geometric question of defining a polytope---a polytope is something that we have an algorithmic procedure to traverse extreme points to optimize a linear functional. This definition includes the Hilbert cube, but also many others objects that satisfy \cref{ass:compact,ass:support-conditions,ass:constraints-bounded,ass:countably-many-constraints,ass:partial-sums-compact,ass:steepest-exists,ass:edge-lengths-bounded-away-from-zero,ass:bounded-at-extremes}. 

These assumptions recover key geometric properties that are essential for the simplex viewpoint, as we show in the following result. We will say two extreme points $p$ and $p'$ of $\mathcal P$ are \emph{connected by a path} if there exists a finite or infinite sequence of adjacent edges that define a path of extreme points $(p, p^1, p^2, \dots)$ such that $p^n$ converges to $p'$ in the limit. We say $\mathcal P$ is \emph{path-connected} if all of its extreme points are connected by a path.

\begin{proposition}
If a $\mathcal P$ defined in \cref{eq:region-of-interest} satisfies \cref{ass:compact,ass:support-conditions,ass:constraints-bounded,ass:countably-many-constraints,ass:partial-sums-compact,ass:steepest-exists,ass:edge-lengths-bounded-away-from-zero,ass:bounded-at-extremes} then it is path-connected and all its extreme points are exposed. 
\end{proposition}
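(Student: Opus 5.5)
The proof has two parts: exposedness and path-connectedness. Both come from earlier results, with the simplex machinery run for a suitably chosen linear objective.

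\emph{Exposedness.} Fix $p\in E$. By \cref{thm:characterize-EP}, $\mathcal H(p)=\{p\}$. By \cref{ass:countably-many-constraints} and \cref{rem:labeling-workaround}, the active constraints are $\phi_k(\cdot;p)$, $k=1,2,\dots$. Define
\[
c_p(x)\triangleq -\sum_{k=1}^\infty 2^{-k}\,\frac{\phi_k(x;p)-b_k(p)}{1+\xi+|b_k(p)|}.
\]
By \cref{ass:bounded-at-extremes} and Bauer's principle, the terms are uniformly bounded on $\mathcal P$, so the series converges uniformly on $\mathcal P$. Each term is nonnegative on $\mathcal P$ and vanishes at $p$. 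Hence $c_p(x)=0$ forces $\phi_k(x;p)=b_k(p)$ for all $k$, i.e.\ $x\in\mathcal H(p)=\{p\}$. So $p$ is the unique minimizer of $c_p$ over $\mathcal P$.

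To see that $c_p$ is a genuine continuous linear functional on $X$, I would use the Schauder expansion of \cref{theorem:schauder-basis}. For $x\in\mathcal P$, the coefficients $\phi_k(x-p;p)/\phi_k(q_k(p)-p;p)$ are nonnegative, and by \cref{ass:support-conditions}(b) their denominators are bounded below by $\rho$. So $c_p$ restricted to $\mathcal P$ is a weighted sum of these coefficients, and I would argue continuity on $\mathcal P$ via the uniform bound. If continuity on all of $X$ fails, the fallback is to truncate at finitely many $k$ and separate $p$ from the compact set $\mathcal P\cap\{x : \phi_k(x;p)\le b_k(p)-\eta \text{ for some } k\le K\}$ using Hahn--Banach in the locally convex space. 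Then I would combine this with compactness (\cref{ass:compact}) and \cref{lemma:technical-convergence-lemma}, so that minimizers of the truncations shrink to $p$. I expect this step, producing an honest element of $X^*$ that isolates $p$, to be the main obstacle, since infinite sums of the $\phi_k$ need not be continuous in $\tau$.

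\emph{Path-connectedness.} Fix $p,p'\in E$. Run the Simplex Algorithm from $p^0=p$ with objective $c_{p'}$, whose unique minimizer over $\mathcal P$ is $p'$. If the algorithm terminates, it does so at an optimal extreme point, which must be $p'$, giving a finite edge path. Otherwise \cref{cor:convergence-in-solution} gives $p^n\to p'$, which is exactly an infinite path of adjacent extreme points converging to $p'$.

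This requires checking \cref{ass:uniform-convergence-edge-costs} for $c_{p'}$, which is not among the hypotheses; this is the second delicate point. My plan is to build the weights $2^{-k}$ into $c_{p'}$ so that $|c_{p'}(q_k(p)-p)|$ is dominated by a geometric sequence uniformly in $p$, as in \cref{rem:strongest-cost-convergence}. The tools are \cref{ass:bounded-at-extremes} and the edge bound $D$ from \cref{ass:edge-lengths-bounded-away-from-zero}. If this uniformity cannot be secured, I would instead weaken the argument to the compactness-based accumulation argument following \cref{theorem:convergence-in-value}, using uniqueness of the minimizer.
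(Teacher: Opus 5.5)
Your route is the paper's own. The paper exposes $e$ with $K_e(x)=\sum_{i\in A(e)}2^{-i}s_i(x)$, gets continuity from the M-test on $\mathcal P$ (via \cref{ass:bounded-at-extremes} and Bauer), and gets uniqueness from $\mathcal H(e)=\{e\}$. It then obtains path-connectedness by running the Simplex Algorithm on $K_e$ and citing \cref{cor:convergence-in-solution}. The two steps you flag are exactly the ones the paper passes over, and your repairs do not close them.

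\textbf{Exposedness.} This step cannot be closed from the stated hypotheses at all. Take $X=\R^{\N}$ with the product topology, $\mathcal P=[0,1]^{\N}$, and constraints $x_j\le 1$, $-x_j\le 0$. Every hypothesis of the proposition holds:
\begin{itemize}
\item Tychonoff gives compactness.
\item At each $p\in\{0,1\}^{\N}$ one constraint per coordinate is active, dropping it leaves a coordinate line, inactive slacks equal $1$, and $\mathcal H(p)=\{p\}$.
\item The partial sums $x^n(p)$ agree with $x$ in the first $n$ coordinates and with $p$ afterwards, so they converge.
\item With the usual norm on each coordinate line, every edge has length $1$, and $|\phi_\alpha|\le 1$ on $E$.
\item Every element of $X^*$ depends on only finitely many coordinates, so \cref{ass:steepest-exists,ass:uniform-convergence-edge-costs} hold for every objective.
\end{itemize}
Yet for the same reason no element of $X^*$ has a unique minimizer on $[0,1]^{\N}$, so no extreme point is exposed. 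So your $c_p$ is continuous and affine on $\mathcal P$ without being the restriction of any element of $X^*$. The Hahn--Banach fallback can only produce functionals that separate $p$ from part of $\mathcal P$, never one that exposes it. An extra hypothesis is required, and normability suffices. If $X$ is normed, $\psi_p=\sum_k 2^{-k}\phi_k(\cdot;p)/(1+\|\phi_k(\cdot;p)\|)$ converges absolutely in $X^*$, and your slack argument shows $p$ is its unique maximizer over $\mathcal P$.

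\textbf{Path-connectedness.} Your plan for \cref{ass:uniform-convergence-edge-costs} does not work either.
\begin{itemize}
\item The weights in $c_{p'}$ are attached to constraints active at $p'$. But \cref{ass:uniform-convergence-edge-costs} concerns the edges $q_k(p)-p$, which are indexed by constraints active at the current iterate $p$.
\item For $j\in A(p)$ with $j\neq k$ you do have $\phi_j(q_k(p)-p)=0$. For $j\in A(p')\setminus A(p)$, however, you only have $|\phi_j(q_k(p)-p)|\le 2\xi$, with no decay in $k$. So no geometric domination as in \cref{rem:strongest-cost-convergence} follows.
\item The constant $D$ is measured in the arbitrary line norms $\|\cdot\|_{\ell_k(p)}$, so it carries no information about the values of $c_{p'}$.
\item Your fallback via the accumulation-point argument after \cref{theorem:convergence-in-value} is circular. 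That argument starts from $c(p^n)\to c^*$, which is exactly what \cref{theorem:convergence-in-value} derives from \cref{ass:uniform-convergence-edge-costs}.
\end{itemize}
The paper simply invokes \cref{cor:convergence-in-solution} here without verifying this assumption (or \cref{ass:steepest-exists}) for $K_e$. Moreover, \cref{ass:uniform-convergence-edge-costs} is not among the proposition's hypotheses. To make this half rigorous, you need to assume that the exposing functionals lie in $X^*$ and satisfy \cref{ass:steepest-exists,ass:uniform-convergence-edge-costs}. With that added, the rest of your argument goes through as in the paper.
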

\begin{proof}
We first show all extreme points are exposed. Fix an extreme point $e \in E$ and we show that the linear functional
\[
K_e (x) = \sum_{i\in A(e)}s_i(x) (1/2)^i,
\]
where $s_i(x)=b_i-\phi_i(x)$, exposes the point $p$. 

First, we claim that $K_e$ is a continuous linear functional for every $e\in E$. First, recall that $\xi = \sup_{i\in A, e\in E} s_i(e)< \infty$ by \cref{ass:bounded-at-extremes} and note that $s_i$ are continuous convex functions. By the Bauer Maximum Principle \cite[Theorem~7.69]{hitchhiker} a continuous convex function on a compact convex set attains its maximum at an extreme point, so we conclude that $\sup_{i\in A, p\in P} s_i(p)\le \xi <\infty$. Second, the Weierstrass M-Test provides that a series of functions converges uniformly whenever its terms are bounded by terms of a second series which is absolutely convergent. We have $s_i(p)(1/2)^i \le \xi (1/2)^i$ for all $i \in A$ and $
\sum_{i\in A(e)} \xi (1/2/)^i \le 2\xi  < \infty$
so that the series  $\sum_{i\in A(e)} s_i(p)(1/2)^i$ is uniformly convergent. Finally, by the Uniform Limit Theorem, the limit of a uniformly convergent sequence of continuous functions is continuous.

Second, we show that $K_e$ is uniquely optimized at $e$ over $\mathcal P$. Note that $s_i(p) \ge 0$ for all $i\in A(e)$ since $p\in \mathcal P$. Moreover $s_i(p) = 0$ for all $i\in A(e)$ implies $p=e$ since then $0=b_i - \phi_i(p)$ or $\phi_i(p)=b_i$ for all $i\in A(e)$ so that  $p=e$ uniquely since $\mathcal H(e) =\{e\}$ by \cref{prop:extreme-then-intersect}. Since $s_i(p) = 0$ for all $i\in A(e)$ is optimal for the problem 
\begin{equation}\label{eq:last-problem}
\min_{p\in \mathcal P} K_e(p),
\end{equation}
extreme point  $e$ is the  unique optimal solution for $\min_{p\in \mathcal P} K_e(p)$ so that
$\min_{p\in \mathcal P} K_e(p)= K_e(e)$. Hence all extreme points are exposed points. 

Finally, we show every pair of extreme points is connected by a path of edges. The simplex method applied to the minimization   problem \cref{eq:last-problem} will by \cref{cor:convergence-in-solution} generate a sequence of edge connected extreme points $p^n$ terminating in $e$ or converging to $e$ as $n\rightarrow \infty$ regardless of starting extreme point $e^\prime$ since \cref{eq:last-problem} has the unique solution $e$. We conclude that there is an edge path connecting any two extreme points $e$ and $e^\prime$ in polytope $\mathcal P$.
\end{proof}

\section{Conclusion with a discussion of the assumptions}\label{s:conclusions}

In this paper we have studied a geometric version of the simplex method for topological vector spaces. We imposed a collection of assumptions to ensure that the geometric mechanics of the simplex method---moving from extreme points to extreme points along ``improving'' edges---converges in objective value to optimality. We show that our conditions generalize those found in the literature and cover the Hilbert cube, an interesting example that has troubled earlier definitions of the notion of a polytope (even though it is arguably the simplest possible feasible region of a linear program in our context).

A natural concern with any infinite-dimensional extension of the simplex method is the number and strength of structural assumptions required to recover the familiar geometric picture from finite dimensions. We emphasize that the assumptions introduced in this paper are not designed to be minimal. Rather, they serve to isolate the precise points at which infinite dimensionality obstructs the classical simplex argument and to make those obstructions explicit. In finite-dimensional linear programming, properties such as isolation of extreme points, nondegenerate edges, finite basis representations, and assured attainment of steepest descent directions are guaranteed by finite-dimensional polyhedrality. In general locally convex spaces, these properties must be imposed directly.

Viewed in this light, our framework should be understood as a structural existence result: it identifies a broad class of closed convex sets—extending beyond previously studied settings—for which a geometric simplex method is well-defined in principle and converges in value. Several of the assumptions jointly rule out distinct forms of geometric or analytic degeneracy, and we do not claim their logical independence. However, they are consistent, since, for example, the Hilbert cube satisfies all assumptions. Still, a number of the assumptions imposed in this paper reflect genuine obstructions that arise when attempting to extend the geometric simplex method beyond finite dimensions, rather than mere technical conveniences. 

Some form of compactness or effective boundedness (\cref{ass:compact}) is unavoidable if one wishes to guarantee the existence of extreme points and attainment of optimal solutions for continuous linear objectives. In general locally convex spaces, closed convex sets may lack extreme points altogether or admit minimizing sequences with no convergent subsequence, rendering any simplex-type method ill-posed at the outset (see, e.g., \cite{anderson-nash}). 

Likewise, uniform separation of active and inactive constraints at extreme points (\cref{ass:support-conditions}) is necessary to exclude extreme points with vanishing slack, a phenomenon closely related to non-exposed extreme points studied by \cite{klee1956strict,phelps1969infinite}. In the presence of diminishing slack, extreme points fail to behave as isolated geometric vertices, and no meaningful notion of an edge direction along which to move can be defined.

Several further assumptions rule out failure modes that are specific to infinite dimensions and have no finite-dimensional analogue. Uniform boundedness of constraint functionals (\cref{ass:constraints-bounded,ass:bounded-at-extremes}) prevents pathological scaling in which constraint normals grow without bound along geometrically relevant directions, destabilizing the construction of edges and descent rates. Similarly, the requirement that edge lengths be bounded away from zero (\cref{ass:edge-lengths-bounded-away-from-zero}) excludes situations in which sequences of adjacent extreme points accumulate, allowing infinitely many pivots within a compact region with negligible objective improvement. Finally, uniform summability of edge costs (\cref{ass:uniform-convergence-edge-costs}) reflects the necessity of controlling how infinitely many local descent directions aggregate; without such control, infinitesimal but nonsummable improvements can defeat convergence in value. 

Taken together, these considerations suggest that while individual assumptions may admit reformulation or weakening, any geometric simplex method that retains the classical interpretation of moving along edges between adjacent extreme points must confront—and in some form exclude—the same underlying obstructions identified here. Determining which conditions can be weakened, replaced, or derived from more intrinsic geometric properties is an important direction for future work. We hope that by making these requirements explicit, the present work provides a useful organizing framework for understanding when simplex-type methods can meaningfully extend beyond finite dimensions.

Additionally, future work may extend this geometric perspective in several directions.  There is scope for a theory of degeneracy, cycling, and perturbation analysis in this broader setting. One might also investigate what further assumptions guarantee that each iteration of the Simplex Method can be executed in finite time.

\appendix

\section{Proofs of lemmas}

\subsection{Proof of \cref{lemma:0}}

Let line $\ell (p, r) \triangleq \{x\in X \mid x=p+\lambda r, \lambda \in \R \}$ where $r, p\in X$ and let hyperplane $H= \{x\in X \mid \phi(x)=b\}$ where $\phi$ is a linear functional on $X$ and $b\in R$. Suppose $\ell (p,r)$ intersects $H$, i.e., there is a $\lambda_0 \in \R$ such that $\phi(p+\lambda_0 r) = b$. Suppose further that $\ell (p,r)$ does not lie in $H$. Then $\phi(r) \neq 0$ for otherwise $b=\phi(p+\lambda_0 r)=\phi(p)+\lambda_0 \phi(r)=\phi(p)$  in which case  $\phi(p+\lambda r)=\phi(p) + \lambda \phi(r)=b$ for all $\lambda \in \R$ and $\ell(p,r)$ would lie in $H$. Now suppose $\ell(p,r)$ meets $H$ in more than one point. Then there are $\lambda_1$ and  $\lambda_2$, $\lambda_1\neq \lambda_2$ such that $\phi(p + \lambda_1 r)=\phi(p+\lambda_2 r)$ so that $\phi(p)+\lambda_1\phi(r)=\phi(p)+\lambda_2 \phi(r)$ or $\lambda_1 \phi(r)=\lambda_2 \phi(r)$ or $\lambda_1/\lambda_2=1$ since $\phi(r)\neq 0$. Contradiction since $\lambda_1\neq \lambda_2$. Hence $\ell(p,r)$ must meet $H$ in exactly one point. \qed

\subsection{Proof of \cref{lemma:00}}

Let $S$ be a non-trivial affine subspace in $X$ and let $H$ be a hyperplane intersecting $S$ in a singleton point $p$. Let $q$ be another point in $S$ distinct from $p$ which we know exists since $S$ is non-trivial. Then the  line $\ell(p,q-p)=\{p+\lambda(q-p) \mid \lambda \in R\}\subseteq S$ since $S$ is an affine subspace. Suppose there is a point $o\in 
S$ with $o\notin \ell(p,q-p)$. Consider $\ell(p,q-p)$ translated to pass through $o$, i.e. $\ell(o,q-p)=\{o+\lambda (q-p) \mid\lambda\in \R\}.$ Note that $\ell(o,q-p)\subset S$ since it is a translate of a set in $S$. By \cref{lemma:0}, $\ell(o,q-p)$ intersects $H$ in a single point $p'$. 

\medskip Claim:  $\ell(o,q-p)\cap \ell(p,q-p) = \emptyset$. Proof: If not, there is a $y\in X$ with $y=o+\lambda'(q-p)=p+\lambda^{\prime\prime}(q-p)$ or $o-p=(\lambda' - \lambda^{\prime\prime})(q-p)$ in which case $o=p+(\lambda' - \lambda^{\prime\prime})(q-p)$ so that $o\in \ell(p,q-p)$. Contradiction. \medskip

Hence $p'\notin \ell(p,q-p)$ so that $p'\neq p$. Contradiction since $H$ intersects $S$ in the unique point $p$. Hence $S=\ell(p,q-p)$ is a line through $p$. \qed

\subsection{Proof of \cref{lemma:redef-recession-cone}}

Let $d\in \mathcal R(p)$ Then $b_\alpha\geq \phi_\alpha(p+\lambda d)= \phi_\alpha(p)+\lambda \phi_\alpha (d)=b_\alpha+ \lambda \phi_\alpha (d)$ for all $\lambda \geq 0 \mbox{ and } \alpha\in A(p)$ since $\phi_\alpha (p) = b_\alpha$ for $\alpha \in A(p)$. Hence $\phi_\alpha(d)\leq 0 \mbox{ for all } \alpha\in A(p)$.

For the reverse,  let $d\in \{d \in X \mid \phi_\alpha(d) \leq 0 \mbox{ for all } \alpha\in A(p)\}$. Then $\phi_\alpha (p+\lambda d) = \phi_\alpha (p) + \lambda \phi_\alpha(d) = b_\alpha + \lambda \phi (d) \leq b_\alpha \mbox{ for all }\lambda \ge 0 \mbox{ and } \alpha \in A(p)$. \qed

\subsection{Proof of \cref{lemma:extreme-rays}}

Note that $d\in \eta_{\alpha}(p) \Leftrightarrow d+p\in \ell_{\alpha}(p)\cap S_{\alpha} \Leftrightarrow d\in \ell_{\alpha}(p) -p$ and $d\in S_{\alpha}-p \Leftrightarrow d\in \cap_{{\alpha'}\in A(p),{\alpha'}\neq \alpha} H_{\alpha'}-p$ and $\phi_{\alpha}(d)\leq 0$
 $\Leftrightarrow\phi_{\alpha'}(d)=0$ for 
${\alpha'} \in A(p), {\alpha'} \neq \alpha$ and 
$\phi_{\alpha}(d)\leq 0$ since subtracting $p$ translates $p$ in $\ell_{\alpha}(p)$ and $S_{\alpha}$ to the origin. 

First we will show that $\eta_{\alpha}(p),  \alpha\in A(p)$  is an extreme ray of $\mathcal R(p)$ Let $x\in \eta_{\alpha}(p), y\in \mathcal R(p)$ and $x-y\in \mathcal R(p)$. Then a) $\phi_{\alpha'} (x)=0$ for ${\alpha'}\neq \alpha$, b) $\phi_{\alpha}(x)\leq 0$, c) $\phi_{\alpha'}(y)\leq0$ and $\phi_{\alpha'}(x-y)\leq 0 \mbox{ for all }\alpha'\in A(p)$. From c), we have d) $\phi_{\alpha'}(x)\leq \phi_{\alpha'}(y)  \mbox{ for all }\alpha'\in A(p)$. From a) and d), we have e) $\phi_{\alpha'}(y)=0$ for all ${\alpha'} \neq\alpha$. 
From c) and e), we conclude $y\in  \eta_{\alpha}(p)$ and hence  $\eta_{\alpha}(p)$ is an extreme ray of $\mathcal R(p)$.

We turn now to showing that every extreme ray of $\mathcal R(p)$ is equal to $\eta_{\alpha}(p)$ for some $\alpha\in A(p)$. Let $\eta$ be an extreme ray of $\mathcal R(p)$ and suppose $\eta \ne  \eta_{\alpha}(p)$ for any $\alpha\in A(p)$. Let $d\in r$. Since $d\in \mathcal R(p)$ and $d\neq 0, \phi_{\alpha'} (d)\le 0$ for all $\alpha\in A(p)$ while $\phi_{\alpha_1}< 0$ for some $\alpha_1\in A(p)$ since $d=0$ uniquely satisfies $\phi_{\alpha'}(d)=0, {\alpha'} \in A(p)$. Also since $\eta\ne  \eta_{\alpha}(p)$ for any $\alpha \in A(p), \phi_{\alpha_2}(d)<0$ for some $\alpha_2\ne \alpha_1$ in $A(p)$. Let $d' \in \delta  \eta_{\alpha_1}(p) + (1-\delta)  \eta_{\alpha_2}(p)$ and choose $\delta\in (0,1)$ 
so that $d'\notin \eta$ and $d' \ne 0$. Note that $d'\in \mathcal R(p)$ by convexity of $\mathcal R(p)$. 
We want to choose $\lambda \in (0,1)$ so that 
\begin{equation}\label{eq:d-thing}
d-\lambda d' \in \mathcal R(p)  
\end{equation}
since then we have $d\in \eta, d-\lambda d' \in \mathcal R(p)$, and $d-(d-\lambda d')=\lambda d' \in \mathcal R(p)$ while $d-\lambda d' \notin \eta$ so that $\eta$ is not an extreme ray.

We have \cref{eq:d-thing} holds if $\phi_{\alpha'}(d-\lambda d')\le0$ for all ${\alpha'}\in \mathcal R(p)$. Now $d' \in \delta  \eta_{\alpha_1}(p) + (1-\delta)  \eta_{\alpha_2}(p)$ means $d' = \delta  d_1' + (1-\delta)  d_2'$ where $d_1'\in \eta_{\alpha_1}(p)$ and $d_2'\in \eta_{\alpha_2}(p)$. Then $\phi_{\alpha'} (d-\lambda d' )= \phi _{\alpha'}(d)-\lambda (\delta \phi_{\alpha'}(d_1') - (1- \delta) \phi_{\alpha'} (d_2'))$. 

We have for ${\alpha'}=\alpha_1, \phi_{\alpha'}(d)<0, \phi_{\alpha'} (d_1')<0, \phi_{\alpha'} (d_2')=0$,
for ${\alpha'}=\alpha_2, \phi_{\alpha'} (d)<0, \phi_{\alpha'} (d_1')=0, \phi_{\alpha'} (d_2')<0$ and for ${\alpha'}\neq \alpha_1, {\alpha'} \ne \alpha_2, \phi_{\alpha'} (d)\le 0, \phi_{\alpha'} (d_1')=0, \phi_{\alpha'} (d_2')=0$. Hence \cref{eq:d-thing} holds for 
$\lambda < \min (  {\phi_{\alpha_1}(d)}/ {\delta \phi_{\alpha_1}(d_1')},  {\phi_{\alpha_2}(d)} /{(1-\delta) \phi_{\alpha_2}(d_2')})$. Therefore, $\eta$ is not an extreme ray. Contradiction. \qed

\subsection{Proof of \cref{lemma:x-sub-k-get-more-feasible}}

Let $x^n(0) = \sum_{k=1}^n x_k(0)$. Let $i \in \{1,2,\ldots,n\}$. Note that for $i\ne k$,  
\[\phi_i (x_k(0);0)= (\phi_k(x;0)/\phi_k(q_k(0);0)) \phi_i(q_k(0);0) = (\phi_k(x;0)/\phi_k(q_k(0);0))\phi_i(q_k(0);0)=0\] since $\phi_i(q_k(0);0) = 0$. Hence $\phi_i (x^n(0);0) = \phi_i (\sum_{k=1}^n x_k(0);0) =\phi_i(x_i(0);0)=\phi_i(x;0)$ since $r_i(0) \in H_i (x;0)$. Hence $x^n(0)\in H_i(x;0)$ for all $i = 1,2,\ldots,n$. \qed

\subsection{Proof of \cref{lemma:technical-convergence-lemma}}

Let $z=\lim_{n\rightarrow \infty} y_n$. Suppose $z \notin H_k(y;0)$ for some $k$. Put an open set $U$ around $z$ not intersecting closed set $H_k (y;0)$. Let $N$ be large enough so that $y_n \in U$ for all $n \ge N$. Set $M=\max \{k, N\}$. Then $y_M\in U$ and therefore $y_M \nsubseteq H_k(y;0)$ while $y_M \in \cap_{n=1}^M H_k(y;0)$. Contradiction. Hence $z \in H_k(y;0)$ for all $k=1,2,\ldots$. 
Then $z\in \cap_{k=1}^\infty H_k(y;0) =\{y\}$ so that $z=y$, noting that $\cap_{k=1}^\infty H_k(y;0) = \{y\}$ since $\cap_{k=1}^\infty H_k(0) = \{0\}$. \qed

\subsection{Proof of \cref{lemma:termination-condition}}

Suppose the simplex algorithm does not terminate at any finite iteration $n$. Then $\gamma(p^n)<0$ for all $n$. Suppose $\gamma(p^n) \not \rightarrow 0$ as $n\rightarrow\infty$. Then there is an $\epsilon>0$ and a subsequence $n_1,n_2, \ldots$ such that $\gamma(p^{n_k})<-\epsilon$ for $k=1,2,\ldots$. 
But then $c(p^{n_k+1})\le c(p)-k\epsilon\nu$ for all $k$ and therefore $c(p^{n_k+1})<c(x^*)$ for $k>(c(p)-c(x^*))/\epsilon\nu$. Contradiction since $x^*$ is a least cost point in $\mathcal P$. \qed

\section{Technical issue in \cite{ghate2021simplex} and proposed fix}\label{s:fix-ghate}

There is a technical issue with the results in \cite{ghate2021simplex}. In particular, no CILP defined in \cite{ghate2021simplex} satisfies both of that paper's assumptions \cite[(A3)]{ghate2021simplex} and \cite[(A5)]{ghate2021simplex}. To see this, note that any doubly-infinite matrix $A$ satisfying \cite[(A3)]{ghate2021simplex} is a Hilbert-Schmidt operator, since the sums of the squares of the entries of any such $A$ is finite.%
\footnote{To see that an operator $A$ satisfying \cite[(A3)]{ghate2021simplex} is a Hilbert-Schmidt operator, let $X$ be the Hilbert space of sequences $x=(x_j)_{j\ge1}$ with norm 
$\|x\|_X^2=\sum_{j\ge1}\delta^{2j}|x_j|^2$, and let $Y$ be defined analogously with weight $\beta^i$ on the $i$th coordinate, where $0<\beta,\delta<1$. 
Define the isometric isomorphisms $T_\delta:X\to\ell^2$ and $T_\beta:Y\to\ell^2$ by 
$(T_\delta x)_j=\delta^j x_j$ and $(T_\beta y)_i=\beta^i y_i$. 
Conjugating the operator $A=(a_{ij})$ yields $A^0:=T_\beta A T_\delta^{-1}:\ell^2\to\ell^2$ with matrix entries
$m_{ij}=\beta^i a_{ij}/\delta^j$. 
Under \cite[(A3)]{ghate2021simplex}, there exist $\bar a>0$ and $\alpha\in(0,\delta)$ such that 
$|a_{ij}|\le \bar a\,\alpha^j$ for all $i,j$, and hence $|m_{ij}| \le \bar a\,\beta^i(\alpha/\delta)^j$. It follows that
\[
\sum_{i=1}^\infty\sum_{j=1}^\infty |m_{ij}|^2
\le \bar a^2
\Big(\sum_{i=1}^\infty \beta^{2i}\Big)
\Big(\sum_{j=1}^\infty (\alpha/\delta)^{2j}\Big)
<\infty,
\]
since $\beta<1$ and $\alpha<\delta$. 
Therefore, $A^0$ is a Hilbert--Schmidt operator on $\ell^2$, and by invariance of the Hilbert--Schmidt class under conjugation by isometries, $A$ is Hilbert--Schmidt as an operator from $X$ to $Y$.}
It is well known that a Hilbert-Schmidt operator is compact  and no compact operator can be onto between two infinite-dimensional spaces (see, for instance, \cite[Theorem 4.25]{RudinFA1991} and its surrounding discussion), thus violating \cite[(A5)]{ghate2021simplex}.

There is a challenge in weakening \cite[(A3)]{ghate2021simplex} to recover \cite[(A5)]{ghate2021simplex}: it can easily compromise \cite[Lemma~8.2]{ghate2021simplex}, which argues that convergence of bases in the product discrete topology implies convergence in the operator norm. In order to recover \cite[(A5)]{ghate2021simplex} by weakening \cite[(A3)]{ghate2021simplex}, one needs to strengthen the conditions leading up to \cite[Lemma~8.2]{ghate2021simplex}. One combination that works is the following:

\begin{enumerate}[label=(A3$^\prime$)]
\item The doubly infinite matrix $A$ defines a bounded (continuous) linear operator from the Hilbert space $H$ to the Hilbert space $Y$.\label{ass:A3p}
\end{enumerate}
\begin{enumerate}[label=(A8$^\prime$)]
\item The set of bases $\mathcal{B}$ is closed in the product discrete topology (column-wise convergence). Furthermore, for any sequence of bases $\{B^n\} \subseteq \mathcal{B}$ converging to $B \in \mathcal{B}$ in this topology, the sequence of operators converges in the operator norm topology (i.e., $\|B^n - B\|_{op} \to 0$)\label{ass:A8'}
\end{enumerate}
\cref{ass:A3p} is a weakening of \cite[(A3)]{ghate2021simplex}, but \cref{ass:A8'} is a strengthening of \cite[(A8)]{ghate2021simplex}. One can check \cref{ass:A3p} is no longer inconsistent with \cite[(A5)]{ghate2021simplex} and \cref{ass:A8'} asserts the structure that the old \cite[(A3)]{ghate2021simplex} and \cite[(A8)]{ghate2021simplex} was leveraged to deliver  \cite[(Lemma 8.2)]{ghate2021simplex}. Fortunately, the motivating network flow example from \cite{ryan2018simplex} in \cite[Section 9]{ghate2021simplex} satisfies \cref{ass:A3p,ass:A8'}.

\subsubsection*{Declarations}

The authors have no conflicts of interest to declare.

\subsubsection*{Acknowledgements}

The authors would like to thank Google's Gemini, xAI's Grok, and OpenAI's ChatGPT for technical assistance in the preparation of this manuscript, including assistance in verifying proofs and identifying grammar issues and typos. We would also like to thank an anonymous associate editor at \emph{Mathematical Programming} for pointing out a technical issue with an earlier version of the manuscript. Christopher Thomas Ryan is funded by Natural Sciences and Engineering Research Council of Canada (NSERC) [Grant RGPIN-2020-06488], by the Social Sciences and Humanities Research Council of Canada (SSHRC) [Grant AWD-029333].

\small
\bibliographystyle{plainnat}
\bibliography{bibliography-hilbert}

\end{document}